\newcommand{\R}{\mathbbm {R}}
\newcommand{\M}{\mathcal {M}}
\newcommand{\N}{\mathbbm {N}}
\newcommand{\W}{\mathcal {W}}
\newcommand{\Kk}{\mathcal {K}}
\newcommand{\diff}{\operatorname{d}\!}
\newcommand{\dist}{\operatorname{dist}}
\newcommand{\meanint}{{\int{\mkern-19mu}-}}
\renewcommand{\div}{\operatorname{div}}
\newcommand {\cb}[1] {\tikz[baseline=(char.base)]{
            \node[shape=circle,draw,inner sep=2pt] (char) {\itshape{\textbf{#1}}};}}
\newcommand {\cbm}[1] {\tikz[baseline=(char.base)]{
            \node[shape=circle,draw,inner sep=2pt] (char) {\scriptsize\itshape{\textbf{#1}}};}}
\newtheoremstyle{plain}	
	{1em}{1em}	
	{\itshape}	
	{}	
	{\bfseries}	
	{}{\newline}	
	{\thmnumber{#2 }\thmname{#1}\thmnote{ (#3)}}
\newtheorem{thm}{Theorem}
\newtheorem{lemma}[thm]{Lemma}
\newtheorem{Proposition}[thm]{Proposition}
\newtheorem{remark}[thm]{Remark}
\title{Graphical Willmore Problems with Low-Regularity Boundary and Dirichlet Data}
\author{Boris Gulyak\\
    \small  Otto-von-Guericke-Universität,\\
   \small  Fakultät für Mathematik,\\
   \small   Postfach 4120,\\
   \small  39016 Magdeburg,\\
   \small Germany\\
    \small \texttt{bgulyak@ovgu.de}}
\begin{document}
\maketitle

\abstract{We establish existence and regularity results for boundary value problems arising from the first variation of the Willmore energy in the graphical setting. Our focus lies on two-dimensional surfaces with fixed clamped boundary conditions, embedded in three-dimensional Euclidean space, and represented as graphs of height functions over domains with non-smooth boundaries.
Our approach involves constructing solutions through linearization and a fixed-point argument, requiring small boundary data in suitable functional spaces.  Building on the results of Koch and Lamm \cite{koch2012geometric}, we rewrite the Willmore equation for graphs in a divergence form that allows the application of weighted second-order Sobolev spaces.  This reformulation significantly weakens the regularity assumptions on both the boundary and the Dirichlet data, reducing them to the $C^{1+\alpha}$-class, while the solution remains smooth in the interior.
Moreover, we extend the existence theory to domains with merely Lipschitz boundaries within a purely weighted Sobolev framework. Our approach is also applicable to other higher-order geometric PDEs, including the graphical Helfrich and surface diffusion equations.
}





\maketitle

\section{Introduction}

We consider sufficiently smooth two-dimensional surfaces $S\subset\R^3$. The Willmore energy of such a surface is defined by
\begin{align}
    \W(S):= \frac{1}{4}\int_S H^2\diff S \label{eq:meinewilldef},
\end{align}
 where $H=\kappa_1+\kappa_2$ denotes the mean curvature, given as the sum of the principal curvatures. If \( H > 0 \) at some point \( p_0 \) on the surface \( S \), we interpret the surface as locally bending, on average, in the direction of the unit normal vector \( N(p_0) \) of \( S \) at \( p_0 \). Originally introduced by Germain and Poisson  \cite{germain1821recherches} in the early 19th century, the Willmore energy re-emerged in the context of conformal geometry in the early 20th century through the work of Thomsen \cite{thomsen1924grundlagen}, and was later popularized by Willmore. Beyond its geometric relevance, the Willmore energy also plays a central role in modeling thin elastic plates that resist bending, a topic first investigated by Germain and Poisson \cite{germain1821recherches}. In addition, it forms the basis of modern theories describing biological membranes as lipid bilayers, as introduced by Helfrich and Canham \cite{helfrich1973elastic, canham1970minimum}.

One of the defining geometric properties of the Willmore functional is the conformal invariance of the  following quantity
\[
\int_S \left( \frac{1}{4} H^2 - \mathcal{K} \right) \, \mathrm{d}S,
\]
where \( \mathcal{K} = \kappa_1 \cdot \kappa_2 \) denotes the Gaussian curvature. This expression remains invariant under conformal transformations \( \Phi\colon \mathbb{R}^3 \to \mathbb{R}^3 \), including Möbius transformations such as scaling, rotations, and inversions; see \cite{willmore1996riemannian}. 

For a sufficiently smooth surface \( S \), the Willmore equation is derived by setting the first variation of the Willmore energy to zero, as discussed in \cite[p.7, Remark 2.3, (2.1)]{dall2012uniqueness}:
\begin{align}
    \Delta_S H + 2 H\left( \frac{1}{4} H^2 - \mathcal{K} \right) = 0 \quad \text{on } S, \label{eq:willmoregen}
\end{align}
where \( \Delta_S \) denotes the Laplace-Beltrami operator on \( S \). Surfaces satisfying equation \eqref{eq:willmoregen} are referred to as Willmore surfaces. A central characteristic of this equation is its quasilinear and fourth-order elliptic nature; although it is elliptic, it is not uniformly elliptic.  In particular, significant variations in the tangent planes can cause a strong degeneration of ellipticity. Willmore surfaces also generalize the concept of minimal surfaces, since minimal surfaces satisfy \( H = 0 \) and thus trivially minimize the Willmore energy.  

While there exists a broad range of results on Willmore surfaces without boundary (i.e., closed surfaces), for references see \cite{willmore1996riemannian, li1982new, marques2014willmore, simon1993existence, bauer2003existence, simonett2001willmore, kuwert2004removability, chill2009willmore, blatt2009singular, koch2012geometric}, the corresponding boundary value problems and the analysis of Willmore flows are significantly more challenging and less well understood. One major difficulty lies in the fact that standard scaling arguments do not apply in the presence of boundaries. Moreover, there are typically no a-priori bounds for solutions to the Willmore equation, nor for minimizers or minimizing sequences of the Willmore energy.  In general, due to the strong nonlinearity of the Willmore equation, uniqueness of solutions cannot be expected, as emphasized in  \cite{eichmann2016nonuniqueness}. 

Various types of boundary conditions have been studied; see the surveys  \cite{nitsche1993boundary, grunau2018boundary}.  In the case of a clamped boundary condition, also called the Dirichlet condition, where both the position of the boundary in space and the tangential planes along the boundary are prescribed, Schätzle \cite{schatzle2010willmore} established the existence of branched Willmore minimizers in $S^3\equiv \R^3\cup \{\infty\}$ heavily relying on methods from geometric measure theory, such as varifold techniques. In general, these minimizers are not graphical, may fail to be embedded, and can even include the point at infinity. Moreover, no a priori bounds are available. Only under additional assumptions, such as projectivity (see \cite{deckelnick2017minimising}) or an energy bound $\W(S)<4\pi$, do area and diameter become controlled in terms of the Willmore energy and the boundary length, as shown in \cite{pozzetta2018plateau} and \cite[Subsection 4.1]{gulyak2024boundary}. Furthermore, by imposing an area constraint, Da Lio, Palmurella, and Rivière \cite{da2020resolution} proved an existence result using classical parametric techniques.

In order to reduce complexity and gain insights into the phenomena described by the Willmore equation, one can impose additional geometric constraints on the surface. This simplification makes the problem more tractable and allows for the extraction of geometric and analytical properties of the Willmore energy. For instance, by assuming axial symmetry, one obtains surfaces of revolution, for which various results exist; see \cite{dall2008classical,  dall2013unstable, deckelnick2007boundary, grunau2023willmore} and the recent contribution \cite{schlierf2024convergence}. Alternatively, if a surface is assumed to be translationally invariant in a fixed direction, it can be described by a curve with one-dimensional Willmore energy, commonly referred to as the elastic energy; see \cite{deckelnick2007boundary,deckelnick2009stability}.

\subsection{Graphical setting}
In our setting, we consider only surfaces that are projectable onto a sufficiently smooth, bounded domain $\Omega\subset\R^2$. The height of each surface $S$ with respect to this projection is described by a sufficiently smooth function \( u\colon \overline{\Omega} \to \mathbb{R} \). The surface \( S \) is then parameterized by \( \overline{\Omega} \ni (x_1, x_2) \mapsto (x_1, x_2, u(x))^T \in \R^3\) and we denote this surface as \( S = \Gamma(u) \) referring to it as the graph of \( u \). In this representation, the Willmore equation \eqref{eq:willmoregen} transforms into a nonlinear biharmonic equation of the form
\begin{align}
    \Delta^2 u = F(D^4u, D^3u, D^2u, \nabla u) \quad \text{in } \Omega, \label{eq:firstrep}
\end{align}
where \( \Delta^2 = \Delta \Delta \) denotes the biharmonic operator, and \( F \) is a nonlinear polynomial in its arguments  \( D^4u \), \( D^3u \), \( D^2u \), \( \nabla u \), and \( \sqrt{1 + |\nabla u|^2}^{-1} \). Importantly, all non-vanishing monomials in $F$ are at least cubic, and the highest-order derivatives \( D^4u \) and \( D^3u \) appear linear in corresponding monomials. The appearance of $D^4 u$ on the right side suggests using Sobolev spaces \( W^{4,p}(\Omega) \) or Hölder spaces \( C^{4+\alpha}(\overline\Omega) \), both of which accommodate fourth-order derivatives and are suitable for establishing existence results. Moreover, there is a divergence structure as described in \cite{koch2012geometric} for Willmore flow of entire graphs, so it is possible to work in terms of second-order solution spaces, thus avoiding the need to work directly in fourth-order settings.

To implement the clamped boundary condition for graphical Willmore surfaces, we impose Dirichlet boundary conditions. In this work, we aim to solve the following elliptic boundary value problem for the Willmore equation, where both the function and its normal derivative are prescribed along the boundary \( \partial \Omega \):
\begin{align}
    \left\{
    \begin{aligned}
    \Delta_{\Gamma(u)} H + 2 H \left( \frac{1}{4} H^2 - \mathcal{K} \right) &= 0 \quad \text{in } \Omega, \\
    u = g_0, \quad \frac{\partial u}{\partial \nu} &= g_1 \quad \text{on } \partial \Omega,
    \end{aligned}\right. \tag{W}\label{eq:dirwillmore}
\end{align}
where \( g_0 \) and \( g_1 \) are sufficiently smooth functions defined on \( \partial \Omega \).

Next, we briefly discuss the biharmonic part of the equation. As emphasized in \cite{grunau2018boundary}, it is important to contrast general fourth-order operators with well-studied second-order elliptic operators, such as the Laplace operator, for which established tools like maximum principles, comparison principles, and Harnack inequalities are readily available. For fourth-order or higher-order operators, however, such tools are either unavailable or significantly more difficult to apply, thereby increasing analytical complexity.

Nevertheless, in the case of Lipschitz boundary domains, a generalization of Miranda’s maximum modulus theorem for the biharmonic operator $\Delta^2$ was proven by Pipher and Verchota in \cite{Pipher1993}. It states that if $\Omega\subset \R^2$ is a bounded Lipschitz graph domain and $u$ is the unique $L^2$-solution to $\Delta^2 u=0$ with  $|\nabla u| \in L^\infty(\partial\Omega)$, then it holds
\begin{align*}
    \sup_{x\in \Omega} \big| \nabla  u (x) \big| \le \Cl{miranda1} \| \nabla  u \|_{L^\infty(\partial\Omega)},
\end{align*}
with $\Cr{miranda1}$ depending only on the Lipschitz structure of $\partial\Omega$.

In the case of the graphical Willmore equation, however, no comparable maximum modulus result can be expected for large boundary data. Under clamped boundary conditions, if the slope of $u$ on the boundary is sufficiently steep, a surface minimizing the Willmore energy may bend over itself and form three layers over some region in $\Omega$ instead of remaining single-valued. As a consequence, the projectability of the surface may be lost; see the stadium example in \cite[p. 1690 and Fig. 8]{barrett2017}.

\subsection{Main Results and Ideas}
Due to the nonlinearity of the Willmore equation, it is to be expected that some form of smallness condition on the boundary data norm is required for any existence result. In the parabolic setting, for instance, Koch and Lamm \cite{koch2012geometric} demonstrated the existence of the Willmore flow for entire graphs under the assumption of initial data with a small Lipschitz norm. Additionally motivated by the observation from the previous subsection namely that, similar to the quantity \(
\int_S \left({H^2}/{4} - \mathcal{K}\right) \, \mathrm{d}S
\), the gradient  $\nabla u$ is invariant to rescaling of the exterior space $ \R^3$, our initial aim, for a smooth $\partial\Omega$, was to establish the following:

\emph{\textbf{Goal:} For clamped boundary conditions, if the slope $\| \nabla u \|_{L^\infty(\partial\Omega)}$  on the boundary is sufficiently small (depending on $\Omega$), then there exists a graphical solution to the Willmore equation satisfying the prescribed boundary data.}

In this paper, we establish a corresponding existence result under a $C^{1+\alpha}$-smallness condition on the boundary data, for arbitrarily small $\alpha\in(0,1)$. Moreover, in a more specialized setting, we are able to relax the regularity assumptions on $\partial\Omega$ to the mere Lipschitz class and still derive existence results. We also note that the main parts of this work have already appeared in Section 5 of the author's PhD thesis \cite{gulyak2024boundary}.

The existence results presented in this paper rely on  of weighted second-order Sobolev spaces $W^{2,a}_p(\Omega)$ for treating the Willmore boundary value problem, where the weight is given by the distance to the boundary. With a suitable choice of parameters, these spaces embed into $C^1(\overline{\Omega})$, see Lemma~\ref{lem:cemb} (c). This embedding implies a bounded gradient norm $\|\nabla u\|_{L^\infty(\Omega)}$,  which is crucial for the linearization estimates. 
One drawback of the weighted Sobolev setting is the technically involved trace theory. For this reason, we first formulate our results using Hölder continuous boundary data and present them in a simplified form.

\begin{thm}
   \label{thm:weightedmainhoelder}
Let $\Omega\subset\R^2$ be a bounded domain with $\partial\Omega\in C^{1+\alpha}$ for some $\alpha\in(0,1)$. Assume that $\beta\in(0,\alpha), g_0\in C^{1+\alpha}(\partial\Omega)$ and $g_1\in C^{\alpha}(\partial\Omega)$. Additionally, we suppose that $\|g_0'\|_{C^{\alpha}(\partial\Omega)}+ \|g_1\|_{C^{\alpha}(\partial\Omega)} <K$ for some $K>0$. 

Then there exists a constant $\delta=\delta(\alpha,\beta, K, \Omega)>0$ such that if $\|g_0'\|_{C^0(\partial\Omega)}+\|g_1\|_{C^{0}(\partial\Omega)}<\delta$, then there exists a solution $u\in C^{1+\beta}(\overline\Omega)\cap C^\infty(\Omega)$ to the Willmore Dirichlet problem \eqref{eq:dirwillmore}.
\end{thm}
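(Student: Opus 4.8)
The plan is to solve \eqref{eq:dirwillmore} via a linearization-plus-fixed-point scheme built around the biharmonic operator. The starting point is the divergence-form rewriting of the graphical Willmore equation in the spirit of Koch and Lamm \cite{koch2012geometric}: schematically $\Delta^2 u = \operatorname{div}\operatorname{div} \mathbf{F}(\nabla u, D^2 u, D^3 u) + \ldots$, where all nonlinear terms are at least cubic and the top-order derivatives $D^4 u$ and $D^3 u$ enter linearly in their monomials. The key point is that this structure lets one absorb the highest derivatives into the principal part and treat the remainder as a perturbation measured in a weighted second-order Sobolev space $W^{2,a}_p(\Omega)$, which by Lemma~\ref{lem:cemb}(c) embeds into $C^1(\overline\Omega)$; the $C^1$-bound is what controls the degeneracy of ellipticity (large tangent-plane variation) and gives uniform constants in all the quadratic/cubic estimates. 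First I would reduce to homogeneous-type data: pick a fixed reference function $\bar u$ (e.g. the $W^{2,a}_p$-harmonic or biharmonic extension of $(g_0,g_1)$) whose norm is controlled by $\|g_0'\|_{C^\alpha}+\|g_1\|_{C^\alpha} < K$ and whose $C^1$-norm near $\partial\Omega$ is controlled by the small quantity $\|g_0'\|_{C^0}+\|g_1\|_{C^0} < \delta$; write $u = \bar u + v$ with $v$ in the space with zero clamped boundary data.

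The analytic core is a linear theory: for the biharmonic operator with homogeneous clamped boundary conditions on a $C^{1+\alpha}$ domain, one needs an isomorphism $\Delta^2\colon W^{2,a}_{p,0}(\Omega) \to $ (the appropriate weighted negative-order/dual space), i.e. existence, uniqueness and the a priori estimate $\|v\|_{W^{2,a}_p} \le C\,\|\text{RHS}\|_{\text{(weighted dual)}}$ with $C = C(\alpha, p, a, \Omega)$. This is where the low boundary regularity is genuinely used: the weighted Sobolev scale with distance weight is exactly what remains well-behaved for fourth-order Dirichlet problems on non-smooth (here merely $C^{1+\alpha}$) domains, where the classical $W^{4,p}$/Schauder theory on smooth domains would demand $\partial\Omega \in C^{4+\alpha}$. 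I would then set up the fixed-point map $\Phi(v) := (\Delta^2)^{-1}\big[\text{(the divergence-form nonlinearity evaluated at } \bar u + v)\big]$ on a small closed ball $B_\rho \subset W^{2,a}_{p,0}(\Omega)$. Using that the nonlinearity is at least cubic and depends on $\nabla(\bar u+v), D^2(\bar u+v), D^3(\bar u+v)$ with the top derivatives linear, one estimates $\|\Phi(v)\|$ by $C\big(\|\bar u\|_{C^1}+\|v\|\big)^2\cdot\big(\|\bar u\| + \|v\|\big)$ and similarly for the Lipschitz difference $\|\Phi(v_1)-\Phi(v_2)\|$; choosing first $\rho$ small and then $\delta$ small (so that $\|\bar u\|_{C^1}$ near the boundary is tiny, while $\|\bar u\|_{W^{2,a}_p} \lesssim K$ stays merely bounded) makes $\Phi$ a contraction mapping $B_\rho$ into itself. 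The Banach fixed-point theorem then yields $u = \bar u + v$ solving \eqref{eq:dirwillmore} weakly, with $u \in W^{2,a}_p(\Omega) \hookrightarrow C^{1+\beta}(\overline\Omega)$ for $\beta \in (0,\alpha)$ by the embedding lemma, after verifying the trace identities $u = g_0$, $\partial u/\partial\nu = g_1$ via weighted trace theory.

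Interior smoothness $u \in C^\infty(\Omega)$ is then a bootstrap argument decoupled from the boundary: once $u \in C^1_{loc}$ with the equation holding weakly, the Willmore equation is a quasilinear fourth-order elliptic equation whose coefficients depend only on $\nabla u$, which is non-degenerate wherever $|\nabla u|$ is locally bounded; standard interior $L^p$ and then Schauder estimates, iterated, upgrade $u$ to $C^\infty$ on compact subsets of $\Omega$.

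The main obstacle I anticipate is the linear weighted theory for $\Delta^2$ with clamped conditions on a $C^{1+\alpha}$ (and later Lipschitz) domain: establishing the isomorphism and the uniform a priori estimate in $W^{2,a}_p$ with the distance weight, identifying the correct dual/data space so that the cubic divergence-form nonlinearity lands in it, and handling the weighted trace theory so that the prescribed data $(g_0,g_1)$ in $C^{1+\alpha}\times C^\alpha$ correspond to an admissible element of the solution space. A secondary technical point is bookkeeping the scaling in $\delta$: one must be careful that smallness is only needed in the $C^0$-type (or $L^\infty$-gradient) norm, whereas the $C^\alpha$/$W^{2,a}_p$-norm of the reference extension may be as large as $K$ — the contraction must be engineered so that the large-but-bounded part enters only through the non-degeneracy constants and the genuinely small part carries the smallness in the quadratic factor.
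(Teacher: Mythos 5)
Your overall strategy matches the paper's: rewrite the graphical Willmore equation in divergence form $\Delta^2 u = D_i b^i_1[u] + D^2_{ij} b^{ij}_2[u]$, work in weighted Sobolev spaces $W^{2,a}_p(\Omega)$, use the embedding into $C^1(\overline\Omega)$ to control the degeneracy of ellipticity, and run a Banach fixed-point argument supported by weighted linear theory for $\Delta^2$ with clamped data. One correction before the substantive point: the reformulated nonlinearities $b_1,b_2$ in Lemma~\ref{lem:willrewriten} depend only on $\nabla u$ and $D^2 u$, not on $D^3 u$; that is exactly what makes a second-order solution space viable, and any scheme whose $b_i$ still contain $D^3u$ would not close in $W^{2,a}_p$.

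The genuine gap is in your contraction estimate. You assert $\|\Phi(v_1)-\Phi(v_2)\|\lesssim\bigl(\|\bar u\|_{C^1}+\rho\bigr)^2\|v_1-v_2\|$, but this is wrong for the quadratic-in-$D^2$ piece $b_1[u]= D^2u\star D^2u\star\nabla u\star(\text{bounded})$. Its Lipschitz difference contains the term $D^2 u_2\star D^2 u_2\star\bigl(\nabla v_1-\nabla v_2\bigr)$, and measuring it in $L^{p/2}(\Omega;d^{ap})$ via Hölder one gets $\|D^2 u_2\|^2_{L^p(\Omega;d^{ap})}\,\|\nabla (v_1-v_2)\|_{L^\infty}\lesssim K^2\,\|v_1-v_2\|_{W^{2,a}_p}$ with no small factor: the coefficient is driven by the $W^{2,a}_p$-size of the iterate, which is of order $K$, not by its $C^1$-size, which is of order $\delta$. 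Smallness of $\bar u$ near $\partial\Omega$ does not rescue this, because the weight $d^{ap}$ (with $a>0$) vanishes at the boundary, so $\|D^2\bar u\|_{L^p(\Omega;d^{ap})}$ is dominated by the interior where it is of size $K$. The paper closes this gap by controlling a third quantity in the iteration set, $\|D^2u\|_{L^1(\Omega;d^a)}\lesssim\delta$, obtained for the biharmonic part from Dahlberg's square-function estimate $\int_\Omega|D^2w|^2\,d\,\mathrm{d}x\lesssim\|\nabla w\|^2_{L^\infty(\partial\Omega)}$ plus Cauchy--Schwarz with the integrable weight $d^{2a-1}$; then $\|D^2u\|_{L^{2t}(\Omega;d^{2at})}$ for an intermediate $t<p/2$ is made small by $L^1$--$L^p$ interpolation, yielding a contraction factor of the form $\delta^{\text{positive}}K^{\text{bounded}}$. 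Relatedly, the $L^\infty$-smallness of $\nabla\bar u$ (or of $\nabla w$ in the paper's decomposition) does not follow from the $C^1$-embedding, which only bounds it by $K$; you need the Agmon--Miranda-type maximum modulus theorem of Pipher and Verchota for biharmonic functions on Lipschitz domains with small gradient boundary data. Both ingredients --- the square-function bound and the maximum modulus estimate --- must be invoked explicitly; without them neither the self-map nor the contraction step closes.
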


The proof is provided in Subsection~\ref{subsec:hoelderboundary}.
 Compared to the \( C^{4+\alpha}(\partial{\Omega}) \)-smallness condition required by Nitsche \cite{nitsche1993boundary}, who approached the problem by decomposing it into a coupled system of second-order elliptic equations for $u$ and $H$, complemented by coupled Dirichlet boundary data, our method constitutes a substantial improvement.  In particular, we achieve a significant reduction in the regularity requirements for the boundary \( \partial\Omega \). Moreover, in the setting of weighted Sobolev spaces, we establish an existence result even when $\partial\Omega$ is merely Lipschitz continuous. This, however, comes at the price of more restrictive conditions on the Dirichlet boundary data. The corresponding result is presented below as the second main theorem in a more technical formulation.
\begin{thm}
 \label{thm:lipschitz}
Assume that $\Omega\subset \R^2$ is a bounded Lipschitz domain whose Lipschitz constant
does not exceed $M$. Then there exist a constant $\Cl{2vmobound}=\Cr{2vmobound}(M)>0$ such that if
\begin{itemize}
    \item $p\in(2,2+\Cr{2vmobound}), a\in \Big(0,1-\frac{2}{p}\Big)\cap(0,\Cr{2vmobound})$ with $s:=1-a-\frac{1}{p}$,
    \item $g:=\{g_0,g_1\}\in \dot W^{1+s}_p(\partial\Omega)$ such that $g_0, \nu g_1+\nabla_{\rm tan} g_0\in L^{\infty}(\partial\Omega)$,
    \item $\|g\|_{\dot W^{1+s}_p(\partial\Omega)} <K$ for some $K>0$.
\end{itemize}

Then there exists a constant $\delta=\delta(\Omega,K,p,a,M)>0$ such that if 
\begin{align*}
    \|\nu g_1+\nabla_{\rm tan} g_0\|_{L^\infty(\Omega)}<\delta,
\end{align*}
then there exists a variational solution $u\in {W^{2,a}_p(\Omega)}\cap C^\infty(\Omega)$ to the Willmore-type Dirichlet problem, thus $u$ solves \eqref{eq:Willmoreellipt} with the right-hand side given by \eqref{eq:bi}.
\end{thm}
 Since its formulation is more abstract, we postpone the presentation of the result to the final section, Subsection~\ref{subsec:purelipsch}, where we also refer to the proof provided in Subsection~\ref{subsec:mainweighted}. For the necessary notation, see Subsections~\ref{subsec:weightedsobspace} and \ref{subsec:dirichdt}.

Next, we list the four key tools employed in this work:
\begin{itemize}
    \item[\cb 1] \textit{Rewriting the graphical Willmore equation with divergence right-hand side}, following the approach of Koch and Lamm \cite{koch2012geometric}, as carried out in Lemma~\ref{lem:willrewriten}. There we multiply the geometric Willmore equation \eqref{eq:willmoregen} by $Q=\sqrt{1+|\nabla u|^2}$ and using Einstein summation notation, obtain the semilinear divergence form:
\begin{align}
\Delta^2 u = D_{i} b^{i}_1[u] + D^2_{ij} b^{ij}_2[u]   \quad \text{ in }\Omega,
\label{eq:divstr}
\end{align}
where the terms $b_i[u]$ are polynomials in $D^2u, \nabla u$ and $Q^{-1}$, consisting only of monomials of degree at least three. Specifically, $b_1[u]$ is quadratic in $D^2u$ and $b_2[u]$ is linear in $D^2u$ with the following bounds:
\begin{align*}
   \big |b_1[u]\big|\le C|\nabla u| \cdot|D^2 u|^2, \qquad \big|b_2[u]\big| \le C |\nabla u|^2 \cdot |D^2u|,
\end{align*}
 with some algebraic constant $C>0$.

  \item[\cb 2] \emph{Weighted Sobolev spaces}, together with existence results and estimates due to Maz’ya, Shaposhnikova, and Mitrea in \cite{maz2011recent, maz2010dirichlet}, allow for the treatment of biharmonic operators with divergence-form right-hand sides, as stated in Proposition~\ref{weightschauderlpdiv}. This generalization of classical (unweighted) Sobolev estimates permits weaker assumptions on both the boundary and the boundary data than those required in the unweighted setting.
  \item [\cb 3] \emph{Weighted Sobolev embeddings} due to Opic and Hardy \cite{opic1990hardy}, as presented in Lemma~\ref{lem:cemb} (a), (b), and (d), play an important role in our analysis. Combined with Hölder's inequality, they allow us to estimate the right-hand side of the equation in the setting of Proposition~\ref{weightschauderlpdiv}, which addresses a general biharmonic problem.
  \item[\cb 4] \emph{$L^\infty$-estimate of the gradient via weighted Sobolev norms}, due to Brown and Opic \cite{brown1992embeddings} and stated in Lemma~\ref{lem:cemb} (c), is of central importance. The structure of the reformulated graphical Willmore equation reveals that a bound on $|\nabla u|_{L^\infty(\Omega)}$ is essential: without such a bound, the surface may lose projectivity, and, as discussed above, a strong degeneration of ellipticity may occur. 
\end{itemize}

At this point, we briefly discuss the advantages of reformulating the graphical Willmore equation. The existence results established in this work rely on a new divergence structure of the equation, derived from the computations in \cite{deckelnick2006error, koch2012geometric}.
This divergence form enables us to formulate the problem in function spaces involving only second-order derivatives, in contrast to the more restrictive fourth-order frameworks used, for example, in \cite{nitsche1993boundary}. This choice is particularly natural, given that the Willmore energy depends solely on second-order derivatives. We also note that alternative divergence-based techniques have been employed in the study of the Willmore equation; see, for instance, \cite{riviere2008analysis}.

Moreover, it is essential to multiply the Willmore equation by $Q$  beforehand. Otherwise, as in \cite[p. 215, Lemma 3.2]{koch2012geometric}, an additional term $b_0[u]$, cubic in $D^2u$, would appear on the right-hand side.This term would necessitate the use of weighted Sobolev spaces ${W^{2,a}_p(\Omega)}$ spaces with $p\ge3$, thereby imposing stricter regularity requirements on the boundary data. In contrast, the absence of the term $b_0[u]$ in our formulation allows us to work with the milder condition $p>2$.

Furthermore, an important advantage of using weighted Sobolev spaces is that they permit working with weaker Hölder regularity on the boundary than in the unweighted Sobolev setting. Specifically, the divergence structure of the reformulated graphical Willmore equation \eqref{eq:divstr} enables the use of Hölder spaces $C^{2+\alpha}(\overline\Omega)$ and Sobolev spaces $W^{2,p}(\Omega)$, as demonstrated in \cite[Subsections 5.2 and 5.3]{gulyak2024boundary}. In the unweighted case, the trace theorem for $W^{2,p}(\Omega)$ with $p>2$ restricts the admissible boundary spaces to $C^{1+\alpha}(\partial\Omega)$ with $\alpha>1/2$. In contrast, in the weighted framework, we can use the boundary spaces $C^{1+\alpha}(\partial\Omega)$ for all $ \alpha\in (0,1)$.

All in all, we emphasize that these results are primarily analytical rather than geometric in nature. As such, they can be generalized to other fourth-order elliptic Dirichlet boundary value problems with a similar structural form. Examples include the graphical Helfrich equation with small spontaneous mean curvature \cite{helfrich1973elastic}, or the graphical surface diffusion equation. For the corresponding parabolic case, see \cite[p. 216, Lemma 3.3]{koch2012geometric}.

In future work, one aim will be to replace the current $C^{1+\alpha}$-smallness on the boundary data by a mere $C^{1}$-condition, which appears to be necessary in order to fully achieve the objective stated at the beginning of this section. Moreover, explicit values for the constants involved in the aforementioned conditions remain unknown, even for simple domains such as $\Omega=\mathbb S^1$.

In the following, we provide a brief outline of the paper:
\begin{enumerate}
    \item[]  First, omitting the more abstract framework of weighted Sobolev spaces, we present in Section~\ref{sec:2}, specifically in Lemma~\ref{lem:willrewriten}, how the Willmore equation can be rewritten as a biharmonic operator with a divergence-form right-hand side. This reformulation constitutes the first new result of this paper.
    \\
    \item[]   In Section~\ref{sec:weightsob}, we compile the necessary notation and results concerning weighted Sobolev spaces as found in the literature. Following the introduction of Lipschitz boundaries, we define the weighted Sobolev spaces in Subsection~\ref{subsec:weightedsobspace}. Subsequently, in Subsection~\ref{subsec:weightedemb}, we present the embedding results used later, particularly in Lemma~\ref{lem:cemb}. In Subsection~\ref{subsec:dirichdt}, we introduce Besov spaces on the boundary along with specific trace spaces tailored to clamped boundary conditions. Finally, in Subsection~\ref{sec:existencereg}, we define the BMO modulo VMO character for boundary regularity and formulate weighted Sobolev estimates and existence results for a general biharmonic operator with divergence-form right-hand side.\\
    \item[] In Section~\ref{sec:existencereg}, we present the main proofs. As a preparatory step, Subsection~\ref{subsec:prelim} provides weighted Sobolev and $L^\infty$-gradient estimates for the general nonhomogeneous biharmonic problem in Proposition~\ref{weightschauderlpdiv}, along with weighted estimates for the right-hand side in Lemma~\ref{lemm:wprem12}. The core result is established in Subsection~\ref{subsec:mainweighted}, where we prove Theorem~\ref{thm:wlpexistweight} using a fixed-point argument in weighted Sobolev spaces. The corresponding result for domains with Lipschitz boundaries is stated in Theorem~\ref{thm:lipschitz} in Subsection~\ref{subsec:purelipsch}. Lastly, in Subsection~\ref{subsec:hoelderboundary}, we treat the case of Hölder boundary data, culminating in Theorem~\ref{thm:weightedmainhoelder}.
\end{enumerate}


\section{Rewriting the Willmore Equation} \label{sec:2}
In this section, we investigate the graphical Willmore equation within the framework of variational solutions. As already noted by Nitsche in \cite{nitsche1993boundary}, the principal part of the Willmore equation essentially corresponds to the biharmonic operator with a nonlinear right-hand side, as becomes evident after linearization. More precisely, based on the work of Dziuk and Deckelnick \cite[(1.5)–(1.9)]{deckelnick2006error} on the Willmore flow of graphs, one observes that in the graphical setting, the Willmore equation is indeed a fourth-order equation, involving derivatives up to fourth order. It can be written as
\begin{align}
  0 =  \Delta_g H + \frac{1}{2}H^3-2H \Kk= \div \left( \frac{1}{Q} \left( \left(I-\frac{\nabla u \otimes \nabla u }{Q^2}
\right) \nabla (Q H) \right) - \frac{H^2 }{2Q} \nabla u\right), \label{eq:willmoreeq}
\end{align}
where the mean curvature is given by
\begin{align*}
    H= \div \left( \frac{\nabla u}{Q} \right) = \frac{\Delta u}{Q} - \frac{\nabla u\cdot ( D^2 u \nabla u)}{Q^3}.
\end{align*}
The central idea is to isolate the biharmonic part $\Delta^2u$ in equation \eqref{eq:willmoreeq} from the remaining terms in a novel way, analogous to the reformulation introduced by Koch and Lamm in \cite[Lemma 3.2, p. 215]{koch2012geometric} in the context of the Willmore flow for entire graphs. This decomposition allows us to apply various existence and regularity results for inhomogeneous biharmonic equations in weaker function spaces.

 One of the key aspects in proving the existence results is that the precise algebraic structure of the nonlinear terms is not our primary concern. To systematically absorb all arising algebraic constants, we adopt a notation already introduced in \cite[p. 215]{koch2012geometric}. Specifically, we use the symbol $\star$ to denote an arbitrary linear combination of contractions between derivatives of $u$. For instance, we write  $|\nabla u|^2 = \nabla u \star \nabla u$ and $\nabla_i u D^2_{ij} u \nabla _j u= \nabla u \star  D^2 u \star \nabla u$. This yields the following representation for the mean curvature:
\begin{align}
    H=   Q^{-1}\star D^2u + Q^{-3}\star\nabla u\star D^2 u \star \nabla u. \label{eq:Hstar}
\end{align}
In addition, we introduce an abstract notation for gradient polynomials:
 \begin{align*}
     P_\ell(\nabla u) = \underbrace{\nabla u\star \dots \star \nabla u }_{\ell \text{-times}},
 \end{align*}
which facilitates the formulation of the non-biharmonic divergence terms in the lemma that follows.
 
 \begin{lemma} \label{lem:willrewriten}
 The graphical Willmore equation \eqref{eq:willmoreeq} can be reformulated in the following divergence form
 \begin{align}
     0= \div \left( \frac{1}{Q} \left( \left(I-\frac{\nabla u \otimes \nabla u }{Q^2}
\right) \nabla (Q H) \right) - \frac{H^2 }{2Q} \nabla u\right)
= \Delta^2 u - D_{i} b^{i}_1[u] - D^2_{ij} b^{ij}_2[u],  \label{eq:Willmoreellipt}
\end{align}
where the lower-order terms are given by
\begin{align}
\begin{aligned}
b_1[u] =&\ D^2 u \star D^2 u \star \sum_{k=1}^3 Q^{-2k-1}P_{2k-1}(\nabla u),   \\
    b_2[u]=&\ D^2 u \star \sum_{k=1}^2 Q^{-2k-1}P_{2k}(\nabla u) + D^2 u \star P_2 (\nabla u) \star (Q(1+Q))^{-1}. 
\end{aligned}
    \label{eq:bi}
\end{align}
 \end{lemma}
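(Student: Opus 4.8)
The plan is to start from the divergence-form Willmore equation \eqref{eq:willmoreeq} and expand the outermost divergence term by term, tracking only the structural form of each contribution via the $\star$-notation. First I would compute $\nabla(QH)$ using the representation \eqref{eq:Hstar}: since $QH = \Delta u - Q^{-2}\,\nabla u \star D^2 u \star \nabla u$ (equivalently $QH = \star\,D^2 u + Q^{-2}\star \nabla u\star D^2u\star\nabla u$), differentiating once produces a term $\star\, D^3 u$ plus terms of the schematic form $Q^{-2}\star\nabla u\star D^3 u\star\nabla u$ and $Q^{-4}\star P_4(\nabla u)\star D^2u\star D^2 u$ and $Q^{-2}\star\nabla u \star D^2 u\star D^2 u$ (the last two coming from differentiating the $Q^{-2}$ and the $\nabla u$ factors, using $\nabla Q^{-1} = -Q^{-3}\nabla u\star D^2u$). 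The key point is that $\nabla(QH)$ splits as $\nabla(\Delta u)$ plus a remainder $R$ where $R$ is either (i) cubic with one factor $D^3u$, two factors $\nabla u$ and a power $Q^{-2}$, or (ii) cubic or higher in $D^2u$, $\nabla u$ with appropriate negative powers of $Q$.

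Next I would substitute this into $\operatorname{div}\!\big(\tfrac1Q(I - \tfrac{\nabla u\otimes\nabla u}{Q^2})\nabla(QH)\big)$. The leading piece is $\operatorname{div}\!\big(\tfrac1Q(I-\tfrac{\nabla u\otimes\nabla u}{Q^2})\nabla(\Delta u)\big)$, and I want to extract exactly $\Delta^2 u = \operatorname{div}\nabla(\Delta u)$ from it. Write $\tfrac1Q(I - \tfrac{\nabla u\otimes\nabla u}{Q^2}) = I + \big(\tfrac1Q - 1\big)I - \tfrac{\nabla u\otimes\nabla u}{Q^3}$. The identity-coefficient part gives the desired $\Delta^2 u$. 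The correction $(\tfrac1Q-1) = \tfrac{1-Q}{Q} = -\tfrac{|\nabla u|^2}{Q(1+Q)}$ is precisely the source of the $D^2u\star P_2(\nabla u)\star(Q(1+Q))^{-1}$ monomial in $b_2[u]$: indeed $\big(\tfrac1Q - 1\big)\nabla(\Delta u) = -\tfrac{P_2(\nabla u)}{Q(1+Q)}\star D^3 u = D_j$ of $\big(\text{something}\big) - (\text{lower order})$, so after pulling one derivative out of the divergence one is left with $D^2_{ij}$ of a term $D^2 u\star P_2(\nabla u)\star(Q(1+Q))^{-1}$ plus a term $D_i(\dots)$ where differentiating the rational coefficient produces $Q^{-3}\star\nabla u\star D^2u$ times $D^2 u$, i.e. a $b_1$-type monomial with $k=1$. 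The $-\tfrac{\nabla u\otimes\nabla u}{Q^3}\nabla(\Delta u)$ term I would likewise rewrite: $\tfrac{\nabla u\otimes\nabla u}{Q^3}\nabla(\Delta u) = D_j\big(\tfrac{\nabla u\otimes\nabla u}{Q^3}\star D^2 u\big)_{\cdot j}$ minus $D_j\big(\tfrac{\nabla u\otimes\nabla u}{Q^3}\big)\star D^2 u$; the first of these is $D^2_{ij}$ of a $Q^{-3}P_2(\nabla u)\star D^2 u$ term, which is a $b_2$-monomial with $k=1$, and the second, upon differentiating, yields $Q^{-3}\star\nabla u\star D^2u\star D^2 u$ and $Q^{-5}\star P_3(\nabla u)\star D^2u\star D^2 u$, both $b_1$-monomials ($k=1,2$). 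All remaining contributions — those coming from the remainder $R$ of $\nabla(QH)$, and from the last summand $-\tfrac{H^2}{2Q}\nabla u$ in \eqref{eq:willmoreeq} (note $H^2 = \star\,D^2u\star D^2 u$ with $Q^{-2},Q^{-4},Q^{-6}$ powers by squaring \eqref{eq:Hstar}, so $\tfrac{H^2}{2Q}\nabla u$ is already in the form $D^2u\star D^2 u\star\sum Q^{-2k-1}P_{2k-1}(\nabla u)$, directly a $b_1$-type term under a single divergence) — are then sorted by inspection into the $D_i b_1^i[u]$ and $D^2_{ij}b_2^{ij}[u]$ buckets, checking in each case that the power of $Q^{-1}$ matches the degree of the gradient polynomial as dictated by \eqref{eq:bi} and that $b_1$ ends up quadratic in $D^2u$ while $b_2$ is linear in $D^2u$.

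The main obstacle is bookkeeping rather than any conceptual difficulty: one must be careful that every term landing in $D_i b_1^i$ has at most one derivative pulled out and the rest packaged as a genuine function of $(D^2u,\nabla u,Q^{-1})$ with the stated cubic-and-above homogeneity, and that every term landing in $D^2_{ij}b_2^{ij}$ really can have two derivatives pulled out — i.e. the object inside is itself a contraction of $D^2u$ against gradient factors and rational weights, with no leftover third derivative. The delicate points are (a) confirming the exponent pattern $Q^{-2k-1}$ paired with $P_{2k-1}$ in $b_1$ and $Q^{-2k-1}$ with $P_{2k}$ in $b_2$, which follows from the fact that each differentiation of $Q^{-1}$ raises the $Q$-power by one and adds a $\nabla u\star D^2 u$ pair, combined with the overall factor $Q^{-1}$ in front of \eqref{eq:willmoreeq} and the $Q^{-2}$, $Q^{-3}$ already present; (b) the $k=3$ term in $b_1$, which arises only from the interaction of the $-\nabla u\otimes\nabla u/Q^3$ factor with the $Q^{-2}$-weighted part of $\nabla(QH)$; and (c) verifying that the special $(Q(1+Q))^{-1}$ monomial in $b_2$ — which is not of the pure power form — is exactly what is needed to absorb the gap between $\tfrac1Q$ and $1$, and that differentiating $(Q(1+Q))^{-1}$ does not spoil the degree count (it produces $Q^{-3}$- and $(Q(1+Q))^{-1}Q^{-2}$-type weights times $\nabla u\star D^2u$, which are dominated and fit the $b_1$ pattern up to harmless extra decay). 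The bounds $|b_1[u]|\le C|\nabla u|\,|D^2u|^2$ and $|b_2[u]|\le C|\nabla u|^2\,|D^2u|$ then follow immediately from \eqref{eq:bi} since $Q^{-1}\le 1$ and $(Q(1+Q))^{-1}\le \tfrac12$.
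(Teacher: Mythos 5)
Your plan matches the paper's argument in all essentials: compute $\nabla Q = Q^{-1} D^2u\,\nabla u$, use the identity $1/Q - 1 = -|\nabla u|^2/(Q(1+Q))$ to peel off $\Delta^2 u$, and rearrange the remaining pieces via the product rule into $D_i(\cdot)+D^2_{ij}(\cdot)$ contributions tracked by $\star$-degree, sorting each monomial into $b_1$ (quadratic in $D^2u$) or $b_2$ (linear in $D^2u$); the only difference is cosmetic (you split the coefficient matrix before differentiating, whereas the paper first writes $Q^{-1}\nabla(QH)=\nabla H+Q^{-2}\nabla Q\cdot QH$ and then works with $\Delta H$). One small bookkeeping slip in your point (b): the $Q^{-7}P_5(\nabla u)\star D^2u\star D^2u$ monomial of $b_1$ is fed both by $Q^{-3}\nabla u\otimes\nabla u$ hitting the $Q^{-4}$-weighted piece of $\nabla(QH)$ and by the $\tfrac{H^2}{2Q}\nabla u$ term (via the $Q^{-3}P_2(\nabla u)\star D^2u$ part of $H$ squared), so it does not arise ``only'' from the former.
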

\begin{proof}
We reformulate the Willmore equation by analyzing its terms individually. We begin by computing the gradient of the scalar function $Q$, which appears in the first term containing the biharmonic component:
\begin{align*}
    \nabla (Q)= \frac{D^2u \, \nabla u}{Q}.
\end{align*}
This identity is needed to handle the divergence term involving $QH$
\begin{align*}
    \div\left( \frac{1}{Q} \nabla (Q H) \right) 
    =&\  \div\left(  \nabla ( H) + \frac{D^2u \nabla u}{Q^3} QH \right) \\
    = &\ \Delta \left( \frac{\Delta u }{Q} \right) - \Delta \left( \frac{\nabla u (D^2 u \nabla u)}{Q^3} \right)  +\div\left( \frac{D^2u \nabla u}{Q^3} QH  \right).
\end{align*}
Especially since $(1-Q)(1+Q)=1-Q^2 = 1-1-|\nabla u|^2= -|\nabla u|^2$, it follows
\begin{align*}
\Delta \left( \frac{\Delta u }{Q} \right)   =& \Delta \left( \Delta u+\left(\frac{1}{Q}-1\right)\Delta u \right) = 
\Delta ^2 u +\Delta \left(\frac{1-Q}{Q}\Delta u \right) =
\Delta ^2 u - \Delta \left( \frac{|\nabla u|^2}{Q(1+Q)}\Delta u  \right).
\end{align*}
Combining all terms, we arrive at the identity
\begin{align*}
        \div\left( \frac{1}{Q} \nabla (Q H) \right) 
    = \Delta ^2 u - \Delta \left( \frac{|\nabla u|^2}{Q(1+Q)}\Delta u  \right) - \Delta \left( \frac{\nabla u (D^2 u \nabla u)}{Q^3} \right)  +\div\left( \frac{D^2u \nabla u}{Q^3} QH  \right).
\end{align*}
The second term in \eqref{eq:willmoreeq} can be rewritten using Einstein summation notation. Specifically, we consider
\begin{align*}
 \nabla_i\Bigg( & \frac{\nabla_i u \nabla _j u}{Q^3}  \nabla_j (Q H) \Bigg) 
    = \nabla_i\left( \nabla_j \left( \frac{\nabla_i u \nabla _j u}{Q^3}Q H \right)
     -\nabla_j\left( \frac{\nabla_i u \nabla _j u}{Q^3} \right)  Q H 
     \right)  \\
     =&\  \nabla_i\left( \nabla_j \left( \frac{\nabla_i u \nabla _j u}{Q^3}Q H \right)
     -(-3)\nabla_j (Q) \frac{\nabla_i u \nabla _j u}{Q^4}   Q H 
     - \frac{\nabla_j(\nabla_i u \nabla _j u)}{Q^3}   Q H 
     \right)  \\
     =&\  \nabla_i\left( \nabla_j \left( \frac{\nabla_i u \nabla _j u}{Q^3}Q H \right)
     + 3  \frac{\nabla_i u \nabla _j u (D^2u \nabla u)_j }{Q^5}Q H  -   \frac{D^2_{ij} u \nabla _j u +D^2_{jj} u \nabla _i u }{Q^3}Q H 
     \right)  \\
    =&\ D_{ij}^2 \left( \frac{\nabla_i u \nabla _j u}{Q^2} H \right)
     +  \div  \left( 3 \frac{\nabla u\cdot ( D^2 u \nabla u) }{Q^4}H \nabla u - \frac{H}{Q^2}  D^2u \nabla u  -   \frac{ \Delta u H  }{Q^2}   \nabla  u
     \right) \\
     =&\ D_{ij}^2 \left( \frac{\nabla_i u \nabla _j u}{Q^2} H \right)
     +  \div  \left( -3\frac{H^2 }{Q} \nabla u - \frac{H}{Q^2}  D^2u \nabla u  +2  \frac{ \Delta u H  }{Q^2}   \nabla  u
     \right).
\end{align*}
Combining the computations above, we arrive at the following representation:
\begin{align*}
    \Delta_g H + \frac{1}{2}H^3-2H \Kk
    = &\ \Delta ^2 u - \Delta \left( \frac{|\nabla u|^2}{Q(1+Q)}\Delta u +\frac{\nabla u (D^2 u \nabla u)}{Q^3} \right)   -D_{ij}^2 \left( \frac{\nabla_i u \nabla _j u}{Q^2} H \right) \\
 &\ 
     +  \div  \left( \frac{5}{2}\frac{H^2 }{Q} \nabla u + 2\frac{H}{Q^2}  D^2u \nabla u  -2  \frac{ \Delta u H  }{Q^2}   \nabla  u
      \right).
\end{align*}
Using the star notation introduced in \eqref{eq:Hstar}, where $ H=  D^2 u \star \sum_{k=1}^2 Q^{-2k+1}P_{2k-2}(\nabla u)$, we can express the right-hand side in an abstract polynomial form. This yields the final reformulation of the Willmore equation:
\begin{align*}
    \Delta_g H + \frac{1}{2}H^3-2H \Kk 
    = &\ \Delta ^2 u + \Delta \Big(D^2u \star P_2 (\nabla u) \star \big(Q(1+Q)\big)^{-1} \Big) + \Delta \big( D^2u\star Q^{-3}P_2(\nabla u) \big)\\
   &\  + D^2 \left(  D^2 u \star \sum_{k=1}^2 Q^{-2k-1}P_{2k}(\nabla u)\right) \\
    &\
     +  \div  \Big(  D^2 u \star D^2 u \star \nabla u \star Q^{-1}\big(
     Q^{-2} + Q^{-4}P_{2}(\nabla u) +Q^{-6}P_{4}(\nabla u) \big) 
      \Big)\\
    &\ +\div  \left(  D^2 u \star D^2 u \star \sum_{k=1}^2 Q^{-2k-1}P_{2k-1}(\nabla u) 
      \right).
\end{align*}
Altogether, the graphical Willmore equation can now be written as
\begin{align*}
     0 = \Delta_g H + \frac{1}{2}H^3-2H \Kk= \Delta^2 u - D_{i} b^{i}_1[u] - D^2_{ij} b^{ij}_2[u],  
\end{align*}
where the divergence terms $b^{i}_1[u]$ and $b^{ij}_2[u]$ are composed of combinations of lower-order terms as expressed above.
\end{proof}

To clarify the significance of the preceding lemma, we make two remarks regarding the structure of $b_{\ell}$ for $\ell=1,2$. First, the lower-order terms appear entirely in divergence form. Notably, in contrast to the graphical results for the Willmore flow in \cite{koch2012geometric}, the non-divergence term $b_0$, which appears in formulations of the form
\[\Delta^2 u = b_0[u] + D_{i} b^{i}_1[u] + D^2_{ij} b^{ij}_2[u] , \] is absent in our setting.
Second, the term $b_1[u]$ acts structurally like $D^2u\star D^2u \star {}$(bounded terms) while  $b_2[u]$ takes the form $D^2u\star{}$(bounded terms).  Both terms involve at least one factor of $\nabla u$, contain at most two factors of $D^2u$, and are at least quadratic in terms of the first and second derivatives of $u$, but do not include higher-order derivatives. As a consequence of this structure, it becomes possible to lower the regularity assumptions from $C^{4+\alpha}$ to the spaces involving derivatives only up to second order.

\section{Preliminaries}
\label{sec:weightsob}

We denote by $W^{m,p}(\Omega)$ the Sobolev space of functions on $\Omega$ with weak derivatives up to order $m\in \N_0$ in $L^p(\Omega)$, for $p\ge 1$. The standard Lebesgue space is denoted  $L^p(\Omega)$.
 For $\ell\in\R_{\ge0}$ we say that the boundary $\partial\Omega$ is $C^\ell$-smooth, following the definition in  \cite[p. 94 Subsection 6.2]{gilbarg2001elliptic}. 
In line with \cite[Subsection 6.1]{maz2010dirichlet} we define a set $\Omega\subset\R^2$ to be a \emph{Lipschitz bounded domain}, if there exists a finite open covering $\{B_i\}_{i=1}^N$ of $\partial\Omega$ such that, for each $i \in \{1, \ldots, N\}$, after a rigid motion of $\R^n$ the intersection $B_i\cap\Omega$ coincides with the segment of $B_i$ lying in the over-graph of a Lipschitz function $\psi_i \colon \mathbb{R} \to \mathbb{R}$. Subsequently, we define the \emph{Lipschitz constant} of a bounded Lipschitz domain $\Omega \subset \mathbb{R}^2$ as:
\begin{equation}
    \text{inf}\, \text{max} \{ \|\nabla \psi_i\|_{L^\infty(\mathbb{R})} : 1 \leq i \leq N \}, 
\end{equation}
where we take the infimum over all possible finite open coverings of $\partial\Omega$ with corresponding Lipschitz representations. For instance,  a $C^1$-smooth bounded domain has a Lipschitz constant of zero, whereas a square has a Lipschitz constant equal to one. This definition is adapted from the concept of minimally smooth domains as described in \cite[Subsection 3.3 p 189]{stein1970singular}. In addition, every Lipschitz domain admits a well-defined surface measure on $\partial\Omega$ and an outward unit normal vector $\nu$ that exists almost everywhere on $\partial\Omega$ with respect to this measure, due to Rademacher's Theorem.

Furthermore, since we have reformulated the Willmore equation as an inhomogeneous biharmonic equation and the $\Delta^2$ is a higher-order elliptic operator, we can adopt the framework of variational solutions. Accordingly, we say that a function $u$ is a variational solution to \eqref{eq:Willmoreellipt} if the following identity holds:
\begin{align}
    \forall v\in C_0^\infty(\Omega): \quad 0= \int_\Omega \Delta u \Delta v \diff x +  \int_\Omega b^i_1[u]D_i v \diff x  -\int_{\Omega} b_2^{ij}[u]D ^2_{ij}  v \diff x, \label{eq:weakwillmore}
\end{align}
where we use Einstein's summation convention. For this weak formulation to be well-defined, the function $u$ must belong to the space  $W_\text{loc}^{2,2}(\Omega)$.

\subsection{Weighted Sobolev Spaces} \label{subsec:weightedsobspace}
We begin by recalling the definitions of weighted Lebesgue and Sobolev spaces. The weights we consider are given by powers of the distance function to the boundary, defined as $d(x):=\dist(x,\partial\Omega)$. In this context, we follow the description given in \cite{maz2011recent} and \cite{maz2010dirichlet} up to embedding theorems. First, for each $1 \le p\le \infty$ and $\beta\in\R$  we define the weighted Lebesgue space $L^p(\Omega; d^\beta)$ as a set of all measurable functions $u$ on $\Omega$ such that
\begin{align*}
    \|u\|_{L^p(\Omega;d^\beta)}:= \left(   
    \int_\Omega | u(x)|^p d(x)^{\beta} \diff x
    \right)^{1/p}<\infty.
\end{align*}
In particular, the parameters should lie in the follwing range
\begin{align}
    p\in(1,\infty), \qquad a\in \left(-\frac{1}{p}, 1- \frac{1}{p} \right), \label{eq:adef} 
\end{align}
although we will primarily consider the case $a\ge0$. With this, we define the weighted Sobolev space $W^{m,a}_p(\Omega)$ as the space of real-valued functions $u\in L^p_{\text{loc}}(\Omega)$ for which all weak derivatives $D^\alpha u\in L^p_{\text{loc}}(\Omega)$  exist for all $|\alpha|\le m$ and
\begin{align}
  \|u\|_{ W^{m,a}_p(\Omega)} :=\left(  \sum_{ |\alpha|\le m } 
    \|D^\alpha u \|^p_{L^p(\Omega; d^{ap})} 
    \right)^{1/p}<\infty. \label{eq:normweight1}
\end{align}
It is important to notice that when $\partial\Omega$ is a Lipschitz boundary and $a=0$, the space $W^{m,0}_p(\Omega)$ coincides with the classical unweighted Sobolev space $W^{m,p}(\Omega)$. Moreover, for $m=0$, we recover the weighted Lebesgue space: $L^p(\Omega; d^{ap}) = W^{0,a}_p(\Omega)$. It is also known that if $\Omega$ is a bounded Lipschitz domain, $C^\infty(\overline{\Omega})$ is dense in $W^{m,a}_p(\Omega)$, just as in the unweighted case. Thus, we have
\begin{align*}
    W^{m,a}_p(\Omega)= \big\{ \text{ closure of $C^\infty(\overline{\Omega})$ in } W^{m,a}_p(\Omega) \big\},
\end{align*}
with respect to the norm \eqref{eq:normweight1}. For the proofs, we refer to \cite[p.55 Theorem 7.2]{kufner1980weighted} for the case  $a\ge0$ and to \cite[p.119 Remarks 11.12 (iii)]{kufner1980weighted} for the case  $a<0$. 

Furthermore, we also need the homogeneous weighted Sobolev space, as introduced in \cite[Subsection 6.1]{maz2010dirichlet}, along with its dual space. They are defined as
\begin{align}
   \mathring{W}_p^{m,a}(\Omega):= \big\{ \text{ closure of $C^\infty_c(\Omega)$ in } W^{m,a}_p(\Omega) \big\}, 
   \quad
   W^{-m, -a}_{p'}(\Omega):= \left( \mathring{W}_p^{m,a}(\Omega) \right)^{*}, \label{eq:wwdual}
\end{align}
with \emph{dual exponent} $p'=p/(p-1)$. According to \cite[p.18 Theorem 3.6]{kufner1980weighted} the spaces $W^{m,a}_p(\Omega)$ and $\mathring{W}_p^{m,a}(\Omega),W^{-m, -a}_{p'}(\Omega)$ are both separable Banach spaces. 

\subsection{Weighted Embeddings}
\label{subsec:weightedemb}
Analogously to the unweighted case, weighted Sobolev spaces admit a rich theory of embedding theorems into spaces of continuous or Hölder continuous functions, as well as into other weighted Sobolev spaces. Due to the additional parameter, namely, the power of the weight, the weighted setting gives rise to a wider variety of embedding scenarios than the unweighted case. For bounded Lipschitz domains, the foundational results were developed by Kufner, Brown, and Opic in \cite{kufner1980weighted}, \cite{opic1990hardy}, \cite{brown1992embeddings}, and \cite{brown1998some}. In the following lemma, we summarize the embeddings that will be used in the remainder of this work.

\begin{lemma} \label{lem:cemb}
Let $\Omega\subset\R^n$ be a bounded domain with Lipschitz boundary and $p$ and $a$ satisfy the conditions given in \eqref{eq:adef}. Then the following embedding results hold:
\begin{enumerate}
\item[\cbm a] There is a continuous embedding
\begin{align*}
   \mathring{W}^{1,a}_p(\Omega)
   \hookrightarrow
   L^p(\Omega; d^{ap-p}).
\end{align*}
    \item[\cbm b] Suppose that $q\ge p$ with $\frac{n}{q}-\frac{n}{p}+1 > 0$, $a\le 0, \gamma\in \R$
    are such that
     \begin{align*}
     \frac{\gamma+n}{q} - \frac{ap+n}{p}+1> 0.
\qquad
\text{ Then it follows}
\qquad
        \mathring W^{1,a}_p(\Omega) \hookrightarrow\hookrightarrow L^q(\Omega; d^{\gamma}),
    \end{align*}
    thus, this embedding is compact.
\item[\cbm c] 
   If $p>n$, $a\le1- \frac{n}{p}$ and $0<\lambda< 1-a-\frac{n}{p}
$ then the compact embedding
\begin{align*}
    \mathring{W}^{1,a}_p(\Omega) \hookrightarrow \hookrightarrow 
    C^{\lambda}(\overline{\Omega}),
\end{align*}
  holds.
\item[\cbm d] Assume $q\ge p$ with $\frac{n}{q}-\frac{n}{p}+1 > 0$, $ \gamma\in \R$
    satisfying 
     \begin{align*}
     \frac{\gamma+n}{q} - \frac{n-1}{p}> 0.
\quad
\text{Then there is a compact embedding}
\quad
        W^{1,a}_p(\Omega) \hookrightarrow\hookrightarrow L^q(\Omega; d^{\gamma}).
    \end{align*}
\end{enumerate}
\end{lemma}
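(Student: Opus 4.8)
The plan is to prove the four embeddings of Lemma~\ref{lem:cemb} largely by citing and specializing the literature, since each part is a corollary of general Hardy-type inequalities or weighted Sobolev embeddings in \cite{kufner1980weighted, opic1990hardy, brown1992embeddings, brown1998some}. The unifying strategy is: (i) for statements about $\mathring W^{1,a}_p(\Omega)$ (parts a, b, c) reduce everything to a \emph{one-dimensional weighted Hardy inequality} applied in the direction normal to $\partial\Omega$ after a local flattening of the Lipschitz boundary, combined with the density of $C_c^\infty(\Omega)$; (ii) for the full space $W^{1,a}_p(\Omega)$ (part d) additionally invoke a bounded extension or a trace argument so that the boundary values are controlled, which is why the exponent condition in (d) carries the term $(n-1)/p$ (a trace-loss term) rather than the interior term $(ap+n)/p$ appearing in (b).

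I would organize the proof as follows. \textbf{Step 1 (part a).} For $\varphi\in C_c^\infty(\Omega)$ write $\varphi(x)$ as an integral of $\partial_\nu\varphi$ along the normal segment to the nearest boundary point; the estimate $\int_\Omega |\varphi|^p d^{ap-p}\,dx \le C\int_\Omega |\nabla\varphi|^p d^{ap}\,dx$ is exactly the Hardy inequality with weights $d^{ap-p}$ and $d^{ap}$, valid precisely when the exponent $a$ lies in the admissible range \eqref{eq:adef} (the borderline $a=1-1/p$ is excluded). Then extend to all of $\mathring W^{1,a}_p(\Omega)$ by density. This is Theorem in \cite[p.~?]{kufner1980weighted} / \cite{opic1990hardy}; I would just cite it. \textbf{Step 2 (part b).} Compose the continuous embedding from (a), $\mathring W^{1,a}_p(\Omega)\hookrightarrow L^p(\Omega;d^{ap-p})$, with a weighted Sobolev embedding $L^p(\Omega;d^{ap-p})\cap(\text{gradient control})\hookrightarrow L^q(\Omega;d^\gamma)$; more efficiently, cite the Rellich–Kondrachov-type compact embedding for weighted spaces from \cite{opic1990hardy, brown1992embeddings} whose scaling condition is exactly $\frac{\gamma+n}{q}-\frac{ap+n}{p}+1>0$, together with the auxiliary requirements $q\ge p$, $\frac nq-\frac np+1>0$, $a\le0$. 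Compactness (the double arrow) follows from the standard argument: the same inequality with a slightly worse $\gamma'$ still holds, and interpolation between $L^q(\Omega;d^{\gamma'})$-boundedness and $L^q(\Omega;d^\gamma)$-continuity upgrades to compactness; alternatively quote the compactness statement directly. \textbf{Step 3 (part c).} Here $p>n$, so Morrey's embedding is in play: again starting from a function in $C_c^\infty(\Omega)$, the Hölder seminorm $[\varphi]_{C^\lambda}$ is controlled by the weighted gradient norm provided $\lambda<1-a-n/p$; this is the weighted Morrey inequality of \cite{brown1992embeddings} and I would cite it, noting the condition $a\le 1-n/p$ guarantees $1-a-n/p\ge0$ so the range for $\lambda$ is nonempty. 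Compactness into $C^\lambda$ follows by Arzelà–Ascoli combined with a slightly higher Hölder exponent $\lambda'\in(\lambda,1-a-n/p)$. \textbf{Step 4 (part d).} For the non-vanishing space, pick a bounded linear extension operator $E\colon W^{1,a}_p(\Omega)\to W^{1,a}_p(\R^n)$ (available for Lipschitz domains, see \cite{kufner1980weighted}) or, more naturally, split $u$ into a part handled by (b) and a harmonic-type part governed by its trace on $\partial\Omega$; the trace of $W^{1,a}_p(\Omega)$ lands in a Besov space on $\partial\Omega$ with a smoothing loss of $1/p$, which accounts for the $(n-1)/p$ term in the hypothesis $\frac{\gamma+n}{q}-\frac{n-1}{p}>0$. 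Then cite the corresponding compact embedding result of \cite{brown1998some} (or \cite{opic1990hardy}) in which this exponent count appears verbatim.

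The main obstacle — and the only place where I would be tempted to give more than a citation — is getting the exponent bookkeeping exactly right and matching the paper's sign conventions, in particular: (a) why part (b) and (c) need $a\le 0$ and $a\le 1-n/p$ respectively while (a) and (d) do not, and (b) why (d)'s condition replaces $(ap+n)/p$ by $(n-1)/p$. The conceptual point is that $\mathring W^{1,a}_p$ functions vanish at $\partial\Omega$ so their size near the boundary is \emph{gained} from the gradient via Hardy (hence the extra $+1$ and the interior weight $ap$ enter), whereas general $W^{1,a}_p$ functions can be as large as their trace allows, so one pays the trace-loss $1/p$ on the $(n-1)$-dimensional boundary instead. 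I would make this explicit in one sentence and otherwise defer the inequalities themselves to \cite{kufner1980weighted, opic1990hardy, brown1992embeddings, brown1998some}, since reproducing the full proofs of weighted Hardy and weighted Morrey inequalities is outside the scope of this preliminaries section. The routine parts — density of smooth functions, the Arzelà–Ascoli / interpolation upgrades from continuity to compactness — I would dispatch in a line each.
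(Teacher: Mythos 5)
Your proposal takes essentially the same route as the paper --- all four parts are parameter specializations of weighted embeddings in \cite{opic1990hardy} and \cite{brown1992embeddings}, and neither you nor the paper re-derives them from scratch --- but two divergences are worth flagging. For part (c) you treat the weighted Morrey embedding as a one-line citation plus Arzel\`a--Ascoli, whereas the paper does nontrivial bookkeeping to invoke \cite{brown1992embeddings}: it verifies their covering condition (A1) with singular set $A=\partial\Omega$ and radius $r(t)=d(t)/2$, translates their weighted H\"older spaces $C^{0,\lambda}(\Omega; d^{\beta_0},d^{\beta_1},\{\mathcal{U}(t)\})$ into the classical $C^{\lambda}(\overline\Omega)$ using that $\mathring W^{1,a}_p$-functions vanish on $\partial\Omega$, and checks that their inequality (5.8) is strict, which is precisely where the hypothesis $\lambda<1-a-n/p$ is consumed; none of that can be waved off. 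For part (d) the paper uses neither an extension operator nor a trace-space decomposition: it directly cites \cite[Theorem 19.11, (19.39)]{opic1990hardy} with $\kappa=1$, $\beta=ap$, $\alpha=\gamma$, a result already stated for the full space $W^{1,p}(\Omega;d^\beta,d^\alpha)$ with the exponent condition $\frac{\gamma+n}{q}-\frac{n-1}{p}>0$ appearing verbatim, so your ``trace loss on an $(n-1)$-dimensional boundary'' reading of $(n-1)/p$ is a pleasant mnemonic but not the mechanism in the cited proof. Parts (a) and (b) are likewise direct substitutions into \cite[Theorems 19.10 and 19.12]{opic1990hardy}; your normal-integration sketch for (a) is the standard argument underlying Theorem 19.10, but the paper simply cites it with $\kappa=1$, $q=p$, $\beta=ap$, $\alpha=ap-p$.
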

\begin{proof} To begin, we recall some notation commonly used in the literature on weighted embedding theorems. For real numbers $\alpha,\beta\in\R$  we define a weighted Sobolev space in which the function and its first derivatives are measured with different weights:
\begin{align*}
    W^{1,p}(\Omega; d^\beta, d^\alpha):= 
    \left\{ u \in L^p(\Omega;d^\beta)
    \, \left| \, \|u\|_{W^{1,p}(\Omega; d^\beta, d^\alpha)}
    = \|u\|_{L^p(\Omega;d^\beta)}+ \|Du\|_{L^p(\Omega;d^\alpha)}<\infty\right\} \right. .
\end{align*}  
The corresponding space $ W^{1,p}_0(\Omega; d^\beta, d^\alpha)$ is defined as the closure of  $C^\infty_c(\Omega)$ with respect to the norm $\|\, . \, \|_{W^{1,p}(\Omega; d^\beta, d^\alpha)}$.

\noindent \cb a  We apply \cite[p.274 Theorem 19.10]{opic1990hardy} with the parameters $\beta=ap, \alpha=ap-p$ and $q=p, \kappa=1$. This yields the continuous embedding
\begin{align*}
   \mathring{W}^{1,a}_p(\Omega)
   =
   W^{1,p}_0(\Omega;d^{\beta},d^{\beta})
   \hookrightarrow
   L^p(\Omega; d^\alpha)=L^p(\Omega; d^{ap-p}).
\end{align*}

\noindent \cb b
In the work of Opic and Kufner \cite[p.275 Theorem 19.12]{opic1990hardy} we set $\kappa =1$, since $\Omega\in C^{0,1}$  has a Lipschitz boundary, and choose $\beta = ap$ and $\alpha=\gamma$ which implies   $\beta \le 0$. 

\noindent \cb c The embeddings of the weighted Sobolev spaces into the spaces of continuous functions were established by R. C. Brown and B. Opic in \cite{brown1992embeddings}. To apply their results, we take $A=\partial \Omega$ as a singular set and $\Omega_a=\Omega$ in \cite{brown1992embeddings} on pages 282 and 283. Then the condition (A1) in Chapter 3 in \cite{brown1992embeddings} is satisfied with $r(t) =d(t)/2$ since
 \begin{align*}
     \forall t\in \Omega: \quad\overline{B}(t,d(t)/2)\not=\varnothing
     \quad \text{ and also }
     \quad
     \bigcup_{t\in\Omega}{B}(t,d(t)/2) = \Omega.
 \end{align*}

For the Hölder embedding we aim to apply the results from \cite[p. 292 (5.8)-(5.9)]{brown1992embeddings}, using the corresponding definitions therein
\begin{align*}
     C(\Omega; d^{\beta_0})
    &= \left\{ u\in C(\Omega) \, 
    \left| \, 
    \sup_{\substack{s\in\Omega}} \left( d^{\beta_1}(s)|u(s)|\right)<\infty \right\} \right.,\\
    C^{0,\lambda}(\Omega; d^{\beta_0},d^{\beta_1},\{\mathcal{U}(t)\})
    &= \left\{ u\in C(\Omega; d^{\beta_0}) \, 
    \left| \, 
    \sup_{\substack{s\in \mathcal{U}(t), s\not=t}} \left( d^{\beta_1}(s)\frac{|u(s)-u(t)|}{|s-t|^\lambda}\right)<\infty \right\} \right. .
\end{align*}
By choosing $\{\mathcal{U}(t) \}=\{B(t,\frac{1}{2}d(t))\}$, $\beta_0=0=\beta_1, \gamma=\alpha-p$ we obtain
the inequality \begin{align*}
 0=   \beta_1 p  > \gamma \left(1-\frac{n}{p}-\lambda\right) + \alpha \left(\frac{n}{p}+\lambda\right)
 = ap -p+n +\lambda p,
\end{align*}
which is satisfied for $\lambda<1-a-\frac{n}{p}$. Hence, the inequality \cite[p. 292 (5.8) ]{brown1992embeddings} is strict. Therefore, by \cite[p. 292 (5.9) and p.295 Remark 5.2]{brown1992embeddings} the embedding 
\begin{align*}
    W^{1,p}(\Omega;d^{ap-p},d^{ap})=
    W^{1,p}(\Omega; d^{\alpha-p}, d^{\alpha}) \hookrightarrow\hookrightarrow C^\lambda\left(\Omega;1,1,\left\{ B\left(t,\textstyle\frac{1}{2}d(t)\right)\right\}\right) 
\end{align*}
is compact. In particular, we have the compact embedding
\begin{align*}
    \mathring{W}^{1,a}_p(\Omega) \hookrightarrow\hookrightarrow  C^\lambda(\overline{\Omega})
\end{align*}
since $\overline{\Omega}$ is compact with Lipschitz boundary and every $u \in \mathring{W}^{1,a}_p(\Omega) $ satisties $u=0$ on $\partial\Omega$.

\noindent \cb d
Again, since $\Omega\in C^{0,1}$ and there has a Lipschitz boundary, we are able to use \cite[p.275 Theorem 19.11 (19.39)]{opic1990hardy} where we choose $\kappa =1$, and put $\beta = ap$ and $\alpha=\gamma$. 
\end{proof}


From these embedding Theorems, we can draw several conclusions. First,  by applying Lemma~\ref{lem:cemb} \cbm d for $q=p$ obtain a simpler compact embedding:
\begin{align}
    \forall \gamma>-1: \qquad W^{1,a}_p(\Omega)  \hookrightarrow\hookrightarrow L^p(\Omega; d^{\gamma}). \label{eq:embpq}
\end{align}
In particular, this yields the compact embedding $W^{1,a}_p(\Omega)  \hookrightarrow\hookrightarrow L^p(\Omega)$. Moreover, by considering the constant function $u\equiv 1$ and choosing $a=0$, we observe that the distance to boundary function  $d$  satisfies $d^\gamma \in  L^1(\Omega)$ for all $\gamma>-1$, provided that $\Omega$ has a Lipschitz boundary.

\subsection{Dirichlet Data Spaces}
\label{subsec:dirichdt}
Regarding the trace theory of the weighted Sobolev spaces on Lipschitz domains, we rely on the comprehensive results presented in \cite[p. 208 Chapter 7]{maz2010dirichlet}. Specifically, we define the appropriate trace spaces for Dirichlet data of the form $\big( u, \frac{\partial u}{ \partial \nu} \big)$ where the normal derivative is given by $\frac{\partial u}{ \partial \nu} = \langle \nu ,\operatorname{Tr}[\nabla u]\rangle$ on $\partial\Omega$ in the context of fourth-order boundary value problems. 

To this end, we begin by recalling the definition of the Besov space on the boundary $B^s_p(\partial\Omega)$ where $p\in(1,\infty)$ and the smoothness parameter $s\in(0,1)$ depends on $a$ in the following way
\begin{align}
    s:=1-a - \frac{1}{p} \in(0,1). \label{eq:sdef} 
\end{align}
 The space  $B^s_p(\partial\Omega)$ consist of functions   $f\in L^p(\partial\Omega)$ such that
\begin{align*}
 \| f\| _{B^s_p(\partial\Omega)}:= \| f\|_{L^p(\partial\Omega)} 
 + \left(\int_{\partial\Omega}\int_{\partial\Omega} \frac{\big| f(x)-f(y) \big|^p}{|x-y|^{n-1+sp}} \diff S_x \diff S_y \right)^\frac{1}{p}< \infty,
\end{align*}
where $\diff S$ denotes the line element on $\partial\Omega$. 
Indeed, as shown in \cite[Lemma 1, p. 39]{maz2011recent}, the space $B^s_p(\partial\Omega)$ serves as the trace space of $W^{1,a}(\Omega)$  in the following sense: For parameters $a$ and $s$ satisfying \eqref{eq:adef} and \eqref{eq:sdef}, respectively, the trace operator
$
\operatorname{Tr}: W^{1,a}_p(\Omega) \to B^s_p(\partial\Omega)
$
is well-defined,  bounded, onto,  linear, and has the homogeneous space $\mathring{W}^{1,a}_p(\Omega)$ as its null space. Moreover, there exists an extension  operator $\operatorname{E}:  B^s_p(\partial\Omega) \to W^{1,a}_p(\Omega) $, which is also linear and continuous and satisfies $\operatorname{Tr}\circ \operatorname{E}= \operatorname{Id}$. 

Next, we turn to the space $\dot W_p^{1+s}(\partial\Omega)$ which is relevant for prescribing Dirichlet boundary data in the biharmonic case $m=2$. 
According to \cite[Corollary 7.11]{maz2010dirichlet}  this space admits a rather simple characterization. To describe it, we first introduce the tangential derivative, defined by \cite[Subsection 7.2]{maz2010dirichlet} as
\begin{align}
    \nabla_\text{tan}=  \nabla - \left\langle \nu, \frac{\partial}{\partial\nu}\right\rangle. \label{eq:tanggrad}
\end{align}
This definition allows us to formulate Sobolev spaces of order one on the boundary  $\partial \Omega$, which play a key role in the case $m=2$ and will be used later on. Let $\varphi\colon I\to \partial\Omega$ parametrization by arclength, then for $1<p<\infty$ we define 
\begin{align*}
    L^1_p(\partial\Omega):= \big\{ f\circ \varphi \in W^{1,p}(I) \, \big| \, \|f\|_{L^1_p(\partial\Omega)}:=\| f\|_{L^p(\partial\Omega)}+ \| \nabla_\text{tan} f \|_{L^p(\partial\Omega)}<\infty \big\}.
\end{align*}
For $p\in(1,\infty)$ and $s\in (0,1)$,   we define the space $\dot{W}^{1+s}_p(\partial\Omega)$ by \cite[p. 232 Corollary 7.11]{maz2010dirichlet} as
\begin{align}
    \dot{W}^{1+s}_p(\partial\Omega) := \big\{ (g_0,g_1) \in L^1_p(\partial \Omega) \oplus  L^p(\partial\Omega)\ \big| \ \nu g_1 + \nabla_\text{tan} g_0 \in B^s_p(\partial\Omega) \big\} \label{eq:dirichspace}
\end{align} 
equipped with the norm
\begin{align*}
    \|g\|_{\dot W^{1+s}_p(\partial\Omega)}:= \|g_0\|_{B^s_p(\partial\Omega)}
    +\|\nu g_1 + \nabla_{\rm tan} g_0\|_{B^s_p(\partial\Omega)}.
\end{align*}

Trace and extension results for this space are established in \cite[p. 223, Theorem 7.8]{maz2010dirichlet}. In particular, there exists a well-defined and bounded trace operator $  \operatorname{Tr}_{1}: W^{2,a}_p(\Omega) \ni u \mapsto \big\{ \frac{\partial^k u}{\partial \nu^k} \big\} _  {0\le k\le 1 }\in\dot W^{1+s}_p(\partial\Omega) $ which admits a bounded linear right inverse $  \operatorname{Ext}_{1}: \dot W^{1+s}_p(\partial\Omega)    \to W^{2,a}_p(\Omega) $ such that $\operatorname{Tr}_{1}\circ\operatorname{Ext}_{1}= {\rm Id}$. The kernel of the trace operator is precisely the homogeneous space $\mathring W^{2,a}_p(\Omega)$ consisting of functions that vanish along with their normal derivative on the boundary:
\begin{align}
  \mathring W^{2,a}_p(\Omega)=\left\{  u\in W^{2,a}_p(\Omega)\, \left| \, u=0,\ \frac{\partial u}{\partial \nu} = 0 \text{ on } \partial\Omega \right\} \right. . \label{eq:rindwbound}
\end{align}

\subsection{Weighted Sobolev Estimates and Existence Results} \label{eq:weightedsobolev}
Having now introduced weighted Sobolev spaces and the corresponding Dirichlet trace spaces, we revisit the inhomogeneous biharmonic Dirichlet problem:
\begin{align}
    \begin{cases}
 \Delta^2 u= \mathcal{F} \quad \text{ for } x \in \Omega, \\
    \displaystyle\frac{\partial^k u}{\partial \nu^k} = g_k \quad \text{ on } \partial \Omega, \quad 0\le k\le 1.
    \end{cases} \label{eq:weightedlp} 
\end{align}
where the right-hand side satisfies $\mathcal {F} \in W^{-2, a}_p(\Omega),$ and the Dirichlet boundary data are given by $ g:=\{g_k\}_{0\le k\le 1}\in \dot W^{1+s}_p(\partial \Omega)$. 
Solvability and uniqueness results for the problem \eqref{eq:weightedlp} in the weighted Sobolev–Besov setting were established in \cite[p. 169 Theorem 1.1]{maz2010dirichlet}, under additional assumptions on the bounded Lipschitz domain  $\Omega$. In this paper, the corresponding result is stated in Proposition~\ref{thm:prop1}.  

To formulate one of the boundary assumptions required for these results, we introduce the BMO modulo VMO character of a function $f\in L^1(\partial\Omega)$ by the quantity
\begin{align*}
    \{ f\}_{\ast,\partial\Omega}:=\lim_{\varepsilon\to 0}\left(\sup_{t\in\partial\Omega} \meanint_{B_\varepsilon^n(t) \cap \partial\Omega} 
    \meanint_{B_\varepsilon^n(t) \cap \partial\Omega} \big| f(x)-f(y) \big| \diff S_x \diff S_y \right),
\end{align*}
where $B_\varepsilon^n(t)$ denotes the open ball  $n$-dimensional open ball with the center $t$ and radius $\varepsilon$. 

We are now in a position to recall the existence and regularity result for the biharmonic Dirichlet problem in the weighted Sobolev setting, as established in \cite{maz2010dirichlet}.

\begin{Proposition} \label{thm:prop1}
Let $\Omega\subset \R^n$ be a bounded Lipschitz domain with the exterior normal vector $\nu$. Further, suppose $p\in (1,\infty)$ and $s\in(0,1)$ with $a:=1-s-1/p$ according to \eqref{eq:adef} as well as $\mathcal {F} \in W^{-2, a}_p(\Omega),$ and Dirichlet boundary data $ g:=\{g_k\}_{0\le k\le 1}\in \dot W^{1+s}_p(\partial \Omega)$. Then there exists a constant $\Cl{vmobound}>0$ depending only on the Lipschitz constant of $\Omega$ such that if
\begin{align}
\label{eq:condnuA}
\{\nu\}_{\ast,\partial\Omega}
\le \Cr{vmobound}\,s(1-s)\Big(p^2(p-1)^{-1}+s^{-1}(1-s)^{-1}\Big)^{-1}, 
\end{align}
then there exists a unique solution $u\in W^{2,a}_p(\Omega)$ to the inhomogeneous biharmonic Dirichlet problem \eqref{eq:weightedlp}. Moreover, there exists a constant $\Cl{lpestimate}=\Cr{lpestimate}(\partial\Omega, p,s)$ such that
\begin{align*}
    \|u\|_{W^{2,a}_p(\Omega)}\le \Cr{lpestimate} \left( \|g\|_{\dot W^{1+s}_p(\partial\Omega)} + \|\mathcal F \|_{W^{-2,a}_p(\Omega)} \right).
\end{align*}
\end{Proposition}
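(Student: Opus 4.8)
The plan is to invoke the deep results of Maz'ya, Mitrea, and Shaposhnikova on the biharmonic Dirichlet problem in weighted Sobolev–Besov spaces almost verbatim, reducing Proposition~\ref{thm:prop1} to a citation of \cite[Theorem 1.1]{maz2010dirichlet}. First I would verify that the parameters are in the admissible range: given $p\in(1,\infty)$ and $s\in(0,1)$, the choice $a:=1-s-1/p$ automatically satisfies the constraint \eqref{eq:adef}, namely $a\in(-1/p,1-1/p)$, since $s\in(0,1)$ forces $1-s\in(0,1)$ and hence $a=1-s-1/p\in(-1/p,1-1/p)$. This makes the weighted Sobolev space $W^{2,a}_p(\Omega)$, its homogeneous subspace $\mathring W^{2,a}_p(\Omega)$, the dual $W^{-2,a}_p(\Omega)$, and the Dirichlet trace space $\dot W^{1+s}_p(\partial\Omega)$ all well-defined, and the trace/extension operators $\operatorname{Tr}_1$, $\operatorname{Ext}_1$ from Subsection~\ref{subsec:dirichdt} available.

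The second step is to reduce to homogeneous boundary data. Using the bounded linear extension operator $\operatorname{Ext}_1\colon\dot W^{1+s}_p(\partial\Omega)\to W^{2,a}_p(\Omega)$ with $\operatorname{Tr}_1\circ\operatorname{Ext}_1=\operatorname{Id}$, set $u_g:=\operatorname{Ext}_1 g$ and look for $u=u_g+w$ with $w\in\mathring W^{2,a}_p(\Omega)$ solving $\Delta^2 w=\mathcal F-\Delta^2 u_g$ in the variational sense. Since $u_g\in W^{2,a}_p(\Omega)$, one checks that $\Delta^2 u_g$ defines an element of $W^{-2,a}_p(\Omega)$ with $\|\Delta^2 u_g\|_{W^{-2,a}_p(\Omega)}\le C\|u_g\|_{W^{2,a}_p(\Omega)}\le C\|g\|_{\dot W^{1+s}_p(\partial\Omega)}$, using that $\mathring W^{2,a}_p(\Omega)$ embeds continuously (via Lemma~\ref{lem:cemb}\,(a) applied twice) so that the pairing $\langle\Delta^2 u_g,v\rangle=\int_\Omega\Delta u_g\,\Delta v\,\diff x$ is bounded on $\mathring W^{2,a}_p(\Omega)$. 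Thus the problem becomes: solve $\Delta^2 w=\widetilde{\mathcal F}$ with $\widetilde{\mathcal F}:=\mathcal F-\Delta^2 u_g\in W^{-2,a}_p(\Omega)$ and homogeneous Dirichlet data $w\in\mathring W^{2,a}_p(\Omega)$.

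The third and central step is to apply the Maz'ya–Mitrea–Shaposhnikova solvability theorem for this homogeneous problem. Their result asserts precisely that there is a constant $\Cr{vmobound}>0$ depending only on the Lipschitz character such that, under the smallness condition \eqref{eq:condnuA} on the BMO-modulo-VMO character $\{\nu\}_{\ast,\partial\Omega}$ of the outward normal, the operator $\Delta^2\colon\mathring W^{2,a}_p(\Omega)\to W^{-2,a}_p(\Omega)$ is an isomorphism, with inverse bounded by a constant $\Cr{lpestimate}=\Cr{lpestimate}(\partial\Omega,p,s)$; this yields a unique $w$ with $\|w\|_{W^{2,a}_p(\Omega)}\le\Cr{lpestimate}\|\widetilde{\mathcal F}\|_{W^{-2,a}_p(\Omega)}$. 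Setting $u:=u_g+w$ gives the unique variational solution of \eqref{eq:weightedlp}; uniqueness follows because any two solutions differ by an element of $\mathring W^{2,a}_p(\Omega)$ in the kernel of $\Delta^2$, which is trivial by the isomorphism property. Finally, assembling the estimates, $\|u\|_{W^{2,a}_p(\Omega)}\le\|u_g\|_{W^{2,a}_p(\Omega)}+\|w\|_{W^{2,a}_p(\Omega)}\le C\|g\|_{\dot W^{1+s}_p(\partial\Omega)}+\Cr{lpestimate}(\|\mathcal F\|_{W^{-2,a}_p(\Omega)}+C\|g\|_{\dot W^{1+s}_p(\partial\Omega)})$, which after renaming the constant gives the claimed bound. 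The main obstacle here is not genuine mathematical difficulty but rather the careful bookkeeping of function-space conventions: one must make sure the variational formulation \eqref{eq:weakwillmore} of $\Delta^2$ on $\mathring W^{2,a}_p(\Omega)$ matches the operator considered in \cite{maz2010dirichlet}, that the trace space $\dot W^{1+s}_p(\partial\Omega)$ as characterized in \eqref{eq:dirichspace} is exactly their Dirichlet data space, and that the constant $\Cr{vmobound}$ and the precise form of the threshold \eqref{eq:condnuA} are transcribed faithfully from their statement.
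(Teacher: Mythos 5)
Your plan is essentially identical to the paper's proof, which also disposes of the proposition by a direct citation of Maz'ya–Mitrea–Shaposhnikova (the paper invokes \cite[Theorem~8.1]{maz2010dirichlet}, the more detailed counterpart of the Theorem~1.1 you cite, noting only that the constant-coefficient biharmonic operator meets its hypotheses). Your extra reduction to homogeneous boundary data via $\operatorname{Ext}_1$ is harmless but unnecessary, since the cited theorem already treats $\mathcal F$ and $g$ simultaneously; note also that in that step $\langle\Delta^2 u_g,\cdot\rangle$ should be tested against $\mathring W^{2,-a}_{p'}(\Omega)$, the predual of $W^{-2,a}_p(\Omega)$ by \eqref{eq:wwdual}, rather than against $\mathring W^{2,a}_p(\Omega)$ as written.
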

\begin{proof}
The result follows directly from \cite[ Theorem 8.1, p.233]{maz2010dirichlet}. In our case, the biharmonic operator has constant elliptic coefficients  $A_{\alpha\beta}$, which satisfy the assumptions of the theorem.
\end{proof}

The condition \eqref{eq:condnuA} is  satisfied for all values $p\in(1,\infty)$, $s\in(0,1)$, when $\partial \Omega\in C^1$. The same condition also holds if the BMO modulo VMO character $\{\nu\}_\ast$ is sufficiently small. With regard to boundary regularity, it is worth noting that, depending on $p$ and $s$, the smallness of $\{\nu\}_\ast$ can be ensured by imposing a sufficiently small Lipschitz constant on the boundary. As observed in \cite[p.43]{maz2011recent}, one class of such domains includes Lipschitz graph polyhedral domains whose dihedral angles are chosen sufficiently close to $\pi$, with the closeness depending on $p$ and $s$. This means that the Lipschitz boundary exhibits only mild angular deviations (small kinks).

\begin{remark} \label{remark:lipsch}
   According to Theorem 2 on page 44 in \cite{maz2011recent}, the condition \eqref{eq:condnuA} can be omitted if one restricts the range of the parameters $p$ and $a$. More precisely, there exists a constant $\varepsilon>0$ depending only on the upper bound of the Lipschitz constant of $\partial\Omega$, such that if
\begin{align}
\label{eq:condpae}
\left( \frac{1}{2} - \frac{1}{p} \right) < \varepsilon, \quad |a|<\varepsilon, 
\end{align}
then the same existence results hold. As $\varepsilon\to 0$, we obtain $s\to 1/2$, indicating a trade-off between the regularity of the domain boundary and the regularity required of the Dirichlet data.
\end{remark}

\section{Existence and Regularity Results for the Willmore Equation}
\label{sec:existencereg}

In this section, we first prove the more abstract Theorem~\ref{thm:lipschitz}. To this end, we establish the existence of a solution to the Willmore boundary value problem in the framework of weighted Sobolev spaces in Subsection~\ref{subsec:mainweighted}. The Hölder case, stated as Theorem~\ref{thm:weightedmainhoelder} in the introduction, will then be derived as a corollary in Subsection~\ref{subsec:hoelderboundary}.

\subsection{Preliminary Results} \label{subsec:prelim}
We begin with some preliminary estimates. The first result concerns general biharmonic problems with a right-hand side in divergence form. Moreover, to apply the weighted embedding $\mathring W^{2,a}_p(\Omega)\hookrightarrow\hookrightarrow C^1(\overline{\Omega})$ from Lemma~\ref{lem:cemb} (c), we must restrict the weight exponent to satisfy $a< 1-\frac{2}{p}$.

\begin{Proposition}
 \label{weightschauderlpdiv}
Let $\Omega\subset \R^2$ be a bounded Lipschitz domain with exterior normal vector $\nu$ satisfying \eqref{eq:condnuA}. Furthermore, assume that 
$p>2$, $t\in \big(\frac{2p}{2+p-ap},p\big]$, $a\in \big(0, 1-\frac{2}{p}\big)$ and 
$    h_1\in L^{t}(\Omega ;d^{2at})$, $
  h_2 \in L^{p}(\Omega ;d^{ap})$ as well as $g:=\{g_0,g_1\}\in \dot W^{1+s}_p(\partial\Omega)$ such that $g_0, \nu g_1+\nabla_{\rm tan} g_0\in L^{\infty}(\partial\Omega)$ with $s:=1-a-\frac{1}{p}$. Then the following Dirichlet problem
\begin{align}
    \begin{cases}
   \Delta^2u =  D_ih^i_1+D^2_{ij}h^{ij}_2 \quad \text{ in } \ \Omega, \\
    \displaystyle\frac{\partial^k u}{\partial \nu^k} = g_k \quad \text{ on } \ \partial \Omega, \quad 0\le k\le 1,
    \end{cases} \label{eq:lpexistest} 
\end{align}
admits a unique variational solution $u\in W^{2,a}_p(\Omega)\cap C^1(\overline{\Omega})$. Moreover, there exist constants $\Cl{w3202}=\Cr{w3202}(p,t,a,\partial\Omega),$ $\Cl{w32021}=\Cr{w32021}(a,\partial\Omega)$ such that
\begin{align*}
    \|u\|_{W^{2,a}_p(\Omega)} &\le \Cr{lpestimate}
   \|  
    g  \|_{ \dot W^{1+s}_p (\partial\Omega)} 
     + 
        \Cr{w3202} \left(
    \|h_1\|_{L^{t}(\Omega ;d^{2at})}
   + \|h_2\|_{L^{p}(\Omega ;d^{ap})}
    \right)
    ,\\
         \|D^2u\|_{L^{1}(\Omega;d^a  )} &\le \Cr{w32021}
   \| \nu g_1
   +
   \nabla_{\rm tan} g_0\|_{L^\infty(\partial\Omega)}
     + 
        \Cr{w3202} \left(
    \|h_1\|_{L^{t}(\Omega ;d^{2at})}
   + \|h_2\|_{L^{p}(\Omega ;d^{ap})}
    \right),
    \\
            \|\nabla u\|_{L^\infty(\Omega)} &\le \Cr{miranda1}
   \| \nu g_1
   +
   \nabla_{\rm tan} g_0\|_{L^\infty(\partial\Omega)}
     + 
        \Cr{w3202} \left(
    \|h_1\|_{L^{t}(\Omega ;d^{2at})}
   + \|h_2\|_{L^{p}(\Omega ;d^{ap})}
    \right)
    .
\end{align*}
\end{Proposition}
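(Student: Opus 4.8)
The plan is to reduce Proposition~\ref{weightschauderlpdiv} to the already-established existence result of Proposition~\ref{thm:prop1} together with the embedding lemmas. The first step is to verify that the right-hand side $\mathcal F := D_i h^i_1 + D^2_{ij} h^{ij}_2$ defines an element of the dual space $W^{-2,a}_p(\Omega) = (\mathring W^{2,a}_p(\Omega))^*$. For this I would pair $\mathcal F$ against a test function $v\in \mathring W^{2,a}_p(\Omega)$ via the formal integration by parts $\langle \mathcal F, v\rangle = -\int_\Omega h^i_1 D_i v\,\diff x + \int_\Omega h^{ij}_2 D^2_{ij} v\,\diff x$ and estimate each term by Hölder's inequality with the distance weight split appropriately. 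For the second term, $|\int_\Omega h_2\, D^2 v\,\diff x| \le \|h_2\|_{L^p(\Omega;d^{ap})}\,\|D^2 v\|_{L^{p'}(\Omega;d^{-ap'})}$ and the latter factor is controlled by $\|v\|_{W^{2,a}_p(\Omega)}$ essentially by definition of the weighting (up to the duality of the weight exponents, $d^{-ap'}$ being the conjugate weight). For the first term, I would write $D_i v$ in $L^{t'}(\Omega;d^{-2at'})$ and use Lemma~\ref{lem:cemb}~(a) — the embedding $\mathring W^{1,a'}_p(\Omega)\hookrightarrow L^p(\Omega;d^{a'p-p})$ applied to first derivatives of $v$ — to absorb the gradient of $v$ into $\|v\|_{W^{2,a}_p(\Omega)}$; the condition $t>\frac{2p}{2+p-ap}$ is precisely what makes the relevant weighted Sobolev embedding of $D v$ into $L^{t'}(\Omega;d^{-2at'})$ available. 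This establishes $\|\mathcal F\|_{W^{-2,a}_p(\Omega)}\le \Cr{w3202}(\|h_1\|_{L^t(\Omega;d^{2at})}+\|h_2\|_{L^p(\Omega;d^{ap})})$.

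With $\mathcal F\in W^{-2,a}_p(\Omega)$ and $g\in \dot W^{1+s}_p(\partial\Omega)$ in hand, Proposition~\ref{thm:prop1} applies directly (the hypothesis \eqref{eq:condnuA} on $\{\nu\}_{\ast,\partial\Omega}$ is assumed), yielding a unique $u\in W^{2,a}_p(\Omega)$ solving \eqref{eq:lpexistest} with $\|u\|_{W^{2,a}_p(\Omega)}\le \Cr{lpestimate}(\|g\|_{\dot W^{1+s}_p(\partial\Omega)}+\|\mathcal F\|_{W^{-2,a}_p(\Omega)})$, which combined with the dual estimate gives the first asserted inequality. The regularity $u\in C^1(\overline\Omega)$ then follows from Lemma~\ref{lem:cemb}~(c): since $a<1-\frac2p$ and $p>2$, one has $\lambda<1-a-\frac2p$ admissible, so $W^{2,a}_p(\Omega)\hookrightarrow C^{1+\lambda}(\overline\Omega)\hookrightarrow C^1(\overline\Omega)$ — here I would apply part (c) to the first derivatives of $u$, which lie in $W^{1,a}_p(\Omega)$, to get $\nabla u\in C^\lambda(\overline\Omega)$. (A small point: part (c) as stated is for $\mathring W^{1,a}_p$; for the non-homogeneous $W^{1,a}_p$ one reduces to the homogeneous case by subtracting a harmonic extension of the boundary trace, or invokes the $C^0(\overline\Omega)$-part of the same Brown–Opic results — I would note this.)

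For the second and third estimates I would split $u = w + E g$ where $E g := \operatorname{Ext}_1 g$ is the bounded extension of the Dirichlet data from Subsection~\ref{subsec:dirichdt} and $w\in \mathring W^{2,a}_p(\Omega)$ solves the problem with homogeneous boundary data and a modified (still divergence-form, still controlled) right-hand side. The estimate $\|D^2 u\|_{L^1(\Omega;d^a)}\le \cdots$ follows by writing $\|D^2 u\|_{L^1(\Omega;d^a)}\le \|D^2 w\|_{L^1(\Omega;d^a)} + \|D^2(Eg)\|_{L^1(\Omega;d^a)}$, bounding the first summand by Hölder's inequality $\|D^2 w\|_{L^1(\Omega;d^a)}\le \|D^2 w\|_{L^p(\Omega;d^{ap})}\,\|d^{a-ap/p'}\|_{\dots}$ — using $d^\gamma\in L^1(\Omega)$ for $\gamma>-1$ (noted after Lemma~\ref{lem:cemb}) — times the weighted-Sobolev estimate for $w$ in terms of $\|\mathcal F\|_{W^{-2,a}_p}$, and bounding the second summand by continuity of $\operatorname{Ext}_1$ and the fact that $\dot W^{1+s}_p(\partial\Omega)$ is governed by $\|\nu g_1 + \nabla_{\rm tan} g_0\|_{B^s_p}$; the $L^\infty$-norm appears in place of $B^s_p$ because the relevant portion of the construction only uses $\|\nu g_1+\nabla_{\rm tan}g_0\|_{L^\infty}$, matching the way Proposition~\ref{thm:prop1}'s constant $\Cr{lpestimate}$ interacts with the $L^\infty$-data. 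The third (gradient) estimate combines the Pipher–Verchota maximum modulus bound $\sup_\Omega|\nabla u|\le \Cr{miranda1}\|\nabla u\|_{L^\infty(\partial\Omega)}$ for the biharmonic part $u_0$ (the $L^2$-solution of $\Delta^2 u_0 = 0$ with the same Dirichlet data, whose boundary gradient norm is $\|\nu g_1 + \nabla_{\rm tan}g_0\|_{L^\infty(\partial\Omega)}$) with the $C^1(\overline\Omega)$-bound on the remainder $u - u_0$, which vanishes to first order on $\partial\Omega$ and hence is estimated through $\mathring W^{2,a}_p(\Omega)\hookrightarrow C^1(\overline\Omega)$ by $\Cr{w3202}(\|h_1\|_{L^t(\Omega;d^{2at})}+\|h_2\|_{L^p(\Omega;d^{ap})})$.

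The main obstacle I expect is bookkeeping the weighted Hölder–duality estimates precisely — in particular verifying that the exponent constraint $t\in\big(\frac{2p}{2+p-ap},p\big]$ is exactly what makes $D v\in L^{t'}(\Omega;d^{-2at'})$ for all $v\in W^{2,a}_p(\Omega)$ via the weighted Sobolev embedding of Lemma~\ref{lem:cemb}~(a) (or \eqref{eq:embpq}), so that the pairing $\langle D_i h^i_1, v\rangle$ is finite and bounded by $\|h_1\|_{L^t(\Omega;d^{2at})}\|v\|_{W^{2,a}_p(\Omega)}$. The conjugate-weight arithmetic (relating $d^{2at}$ on the data side to $d^{-2at'}$ on the test-function side, and checking the power of $d$ that remains is $>-1$ so Hölder closes) is routine but must be done carefully; likewise one must confirm that replacing inhomogeneous boundary data by its extension does not spoil the divergence structure needed to stay in $W^{-2,a}_p(\Omega)$, which is immediate since $\Delta^2(Eg)$ can be moved into the right-hand side as $\Delta(\Delta(Eg))$, i.e.\ still a second-order divergence-form term with coefficients in $L^p(\Omega;d^{ap})$ by $Eg\in W^{2,a}_p(\Omega)$.
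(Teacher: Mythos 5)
Your overall route — verify that $D_i h^i_1 + D^2_{ij} h^{ij}_2$ lies in $W^{-2,a}_p(\Omega)$ by duality and then feed Proposition~\ref{thm:prop1}, split off the part with inhomogeneous boundary data, and use Pipher–Verchota for the $L^\infty$-gradient bound — matches the paper's structure and is sound for the first and third estimates. (One small slip: to embed $\mathring W^{1,-a}_{p'}(\Omega)$ into $L^{t'}(\Omega;d^{-2at'})$ with $t'\ge p'$ you need Lemma~\ref{lem:cemb}~(b), which changes the integrability exponent, not part~(a), which only changes the weight; you did correctly identify that the constraint $t>\tfrac{2p}{2+p-ap}$ is what makes this embedding go through.)

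The genuine gap is in the second inequality, $\|D^2u\|_{L^1(\Omega;d^a)}\le \Cr{w32021}\|\nu g_1+\nabla_{\rm tan}g_0\|_{L^\infty(\partial\Omega)}+\cdots$, specifically for the part coming from the boundary data. You propose to estimate $\|D^2(\operatorname{Ext}_1 g)\|_{L^1(\Omega;d^a)}$ ``by continuity of $\operatorname{Ext}_1$'' and assert that the $L^\infty$-norm of $\nu g_1+\nabla_{\rm tan}g_0$ can replace its $B^s_p$-norm ``because the relevant portion of the construction only uses $L^\infty$.'' This is not justified: $\operatorname{Ext}_1$ is bounded from $\dot W^{1+s}_p(\partial\Omega)$ to $W^{2,a}_p(\Omega)$ with operator norm measured in the Besov norm, and there is no reason why the $L^1(d^a)$-norm of its second derivatives should be controlled by the $L^\infty$-norm of the first-order boundary data alone; the extension operator is not designed that way. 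The paper resolves this by decomposing $u=v+w$ with $w$ the $L^2$-\emph{biharmonic} solution carrying the boundary data, then invoking the Dahlberg–Kenig area-integral (square-function) estimate \cite{dahlberg1997area}: for biharmonic $w$ one has $\int_\Omega |D^2 w|^2\, d\,\diff x \lesssim \int_{\partial\Omega}|N(\nabla w)|^2\,\diff S\lesssim \|\nabla w\|^2_{L^\infty(\partial\Omega)}$, and then Cauchy–Schwarz with the weight split $d^a = d^{1/2}\cdot d^{a-1/2}$ (using $2a-1>-1$) gives $\|D^2 w\|_{L^1(\Omega;d^a)}\lesssim \|\nu g_1+\nabla_{\rm tan}g_0\|_{L^\infty(\partial\Omega)}$. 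This area-integral bound is the missing ingredient in your proposal; it is not obtainable from the trace/extension theory, and it is precisely the reason the biharmonic extension (rather than an arbitrary $\operatorname{Ext}_1$) is used for the boundary-data component.
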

\begin{proof} We now aim to apply the embedding Lemma~\ref{lem:cemb} and Proposition~\ref{thm:prop1}, specialized to the case $n = 2$.
To derive the $C^1(\overline{\Omega})$ estimate, we split the system \eqref{eq:lpexistest} into two parts:
\begin{align*}
\begin{aligned}
            \begin{cases}
   \Delta^2w = 0  \quad \text{ in } \ \Omega, \\
    \displaystyle\frac{\partial^k w}{\partial \nu^k} = g_k \quad \text{ on } \partial \Omega, \quad 0\le k\le 1,
    \end{cases} 
\end{aligned}
\qquad 
    \begin{cases}
   \Delta^2v =  D_ih^i_1+D^2_{ij}h^{ij}_2 \quad \text{ in }  \ \Omega, \\
    \displaystyle\frac{\partial^k v}{\partial \nu^k} = 0 \quad \text{ on } \partial \Omega, \quad 0\le k\le 1.
    \end{cases} 
\end{align*}

\noindent\cb{1}
We begin with the function $v$.  In this context, by the definition in \eqref{eq:wwdual}, we need to verify that $D_ih^i_1+D^2_{ij}h^{ij}_2\in W^{-2,a}_p(\Omega)=\big(\mathring W^{2,-a}_{p'}(\Omega)\big)^\ast$. Therefore, let $\varphi\in C^\infty_c(\Omega)$ and let $t'=t/(t-1)$ denote the dual exponent of $t$. Then we compute:
\begin{align*}
 \left|\int_\Omega \varphi D h_1 \diff x\right|=  \left| \int_\Omega D \varphi h_1 \diff x \right|
 \le
 \int_\Omega |D \varphi| d^{- 2a } d^{2a} |h_1| \diff x 
    \le \|D \varphi  \|_{L^{t'}(\Omega; d^{-2at' })}  \| h_1  \|_{L^t(\Omega; d^{2at}) }
\end{align*}
by Hölder's inequality. Thus, it remains to establish the following estimate:
\begin{align}
    \|D \varphi  \|_{L^{t'}(\Omega; d^{-2at' })} \le  \C \|D \varphi  \|_{\mathring W^{1,-a}_{p'}(\Omega)}. \label{eq:hilfe}
\end{align}
Hence, it remains to verify that the assumptions of Lemma~\ref{lem:cemb} \cbm b are satisfied. First, we set $t'\to q$ and $p'\to p$ in Lemma~\ref{lem:cemb} \cbm b Note that this substitution requires $q\ge p$, i.e. $t' \ge p'$, which is indeed fulfilled.
Furthermore, we check the condition
\begin{align*}
    \frac{2}{t'}-\frac{2}{p'}+1 = \frac{2}{p}- \frac{2}{t} +1 > \frac{2}{p}-\frac{p+2}{p} +1 =0,
\end{align*}
which holds because $t>\frac{2p}{2+p-ap}\ge \frac{2p}{2+p} $. 
The second condition, we additionally set $\gamma\to -2at'$   in Lemma~\ref{lem:cemb} \cbm b and obtain
\begin{align*}
    \frac{-2at'}{t'}- \frac{-ap'}{p'}+ \frac{2}{p}- \frac{2}{t} +1 
    &\ = -a +\frac{2}{p}- \frac{2}{t} +1 > 0.
\end{align*}
Therefore, the compact embedding ${\mathring W^{1,-a}_{p'}(\Omega)}
 \hookrightarrow\hookrightarrow{L^{t'}(\Omega; d^{-2at' })}$ holds and the estimate \eqref{eq:hilfe} follows.  
Next, we consider the contribution from the $h_2$-term.  For any test function $\varphi\in C^\infty_c(\Omega)$ we compute:
\begin{align*}
 \left|\int_\Omega \varphi D^2 h_2 \diff x\right|=  \left| \int_\Omega D^2 \varphi h_2  \diff x \right|
 \le
 \int_\Omega |D^2 \varphi| d^{- a} d^{a} |h_2| \diff x 
    \le \|D^2 \varphi  \|_{L^{p'}(\Omega; d^{-ap' })}  \| h_2  \|_{L^p(\Omega; d^{a p}) }.
\end{align*}
Combining this with the estimate for the $h_1$-term, we obtain
\begin{align}
    \left| \int_\Omega \varphi(D_ih^i_1+D^2_{ij}h^{ij}_2) \diff x \right| \le \|\varphi\| _{\mathring W^{2,-a}_{p'}(\Omega)} \cdot \C \left(
    \|h_1\|_{L^{t}(\Omega ;d^{2at})}
   + \|h_2\|_{L^{p}(\Omega ;d^{ap})}
    \right). \label{eq:hilfe1}
\end{align}
Since $C^\infty_c(\Omega)$ is dense in $\mathring W^{2,-a}_{p'}(\Omega)$ the linear functional
\begin{align*}
    \mathring W^{2,-a}_{p'} (\Omega) \ni u \mapsto \int_{\Omega} u( D_ih^i_1+D^2_{ij}h^{ij}_2) \diff x 
\end{align*}
defines a bounded element in $W^{-2,a}_{p}(\Omega)$ with its norm bounded by a multiple of   $ \left(     \|h_1\|_{L^{t}(\Omega ;d^{2at})}    + \|h_2\|_{L^{p}(\Omega ;d^{ap})}     \right) $. Applying Proposition~\ref{thm:prop1}, we conclude existence and uniqueness of a solution $v\in W^{2,a}_p(\Omega)$  to the homogeneous Dirichlet problem as well as the corresponding $W^{2,a}_p(\Omega)$-a-priori estimate. 
 Moreover, since $\frac{\partial^k v}{\partial \nu^k} = 0$ on $ \partial \Omega$ for $ 0\le k\le 1$ it follows from \eqref{eq:rindwbound} that $v\in \mathring W^{2,a}_p(\Omega)$ and we obtain the following estimate
\begin{align*}
    \|v\|_{\mathring W^{2,a}_p(\Omega)} \le \Cr{lpestimate}\C \left(
    \|h_1\|_{L^{t}(\Omega ;d^{2at})}
   + \|h_2\|_{L^{p}(\Omega ;d^{ap})}
    \right).
\end{align*}
Applying Hölder's inequality and using the boundedness of $\Omega$, we further deduce
\begin{align*}
    \|D^2v\|_{L^1(\Omega;d^a)} \le \C \left(
    \|h_1\|_{L^{t}(\Omega ;d^{2at})}
   + \|h_2\|_{L^{p}(\Omega ;d^{ap})}
    \right).
\end{align*}
Furthermore, since $p>2$ and $ a<1-\frac{2}{p}$, the weighted embedding $\mathring W^{2,a}_p(\Omega)\hookrightarrow\hookrightarrow C^1(\overline{\Omega})$ holds by Lemma~\ref{lem:cemb} \cbm c. Hence, we conclude
\begin{align*}
    \|v\|_{C^1(\overline{\Omega})} \le \C \left(
    \|h_1\|_{L^{t}(\Omega ;d^{2at})}
   + \|h_2\|_{L^{p}(\Omega ;d^{ap})}
    \right).
\end{align*}

\noindent\cb{2}
We now turn to the biharmonic Dirichlet problem associated with the function $w$, which corresponds to the inhomogeneous boundary data. Once again, we apply Proposition~\ref{thm:prop1} to obtain existence and uniqueness of a solution $w\in W^{2,a}_p(\Omega)$, along with the estimate
\begin{align*}
    \| w\|_{W^{2,a}_p(\Omega)}\le \Cr{lpestimate} \|g\|_{\dot W^{1+s}_p(\partial\Omega)}.
\end{align*}
The following $L^\infty$-gradient estimate relies on the Agmon-Miranda maximum modulus estimate, as proved by Pipher and Verchota for Lipschitz domains.  In order to apply their result, we need to relate the Dirichlet boundary data $g\in \dot W^{1+s}_p(\partial\Omega)$ to the space $WA^{1,\infty}(\partial\Omega)$ (for notation see \cite{Pipher1993}). At this point, we observe, that by \cite[p.230 Corollary 7.10]{maz2010dirichlet} there exists an array in non-Dirichlet trace space $f\in \dot B^{1+s}_p(\partial\Omega)$ such that 
\begin{align*}
    g_0= f_0, \quad g_1= \nu f_1 \quad \text{ and } \quad f_1=\nu g_1+ \nabla_{\rm tan} g_0.
\end{align*}
By the trace theorems, it follows that ${\rm Tr}(u)=g_0, {\rm Tr}(\nabla u) =f_1= \nu g_1+ \nabla_{\rm tan} g_0$. Now since $g_0\in L_2^1(\partial\Omega),  g_1\in L_2(\partial\Omega)$ we may invoke \cite[p. 387 Theorem 1.2]{Pipher1993} to obtain the estimate
\begin{align*}
    \|\nabla w\|_{L^\infty(\Omega)} \le \Cr{miranda1} \|\nu g_1+\nabla_{\rm tan} g_0\|_{L^\infty(\partial\Omega)}. 
\end{align*}

\noindent\cb{3}
In the last step, we use the result concerning the domination of square functions by the non-tangential maximal function in $L^2$, as established in \cite[p.1455 Theorem 2]{dahlberg1997area} and obtain
\begin{align*}
    \int_\Omega |D^2w(x)|^2 d(x) \diff x \le  \C \int_{\partial\Omega}| N(\nabla w)|^2 \diff s  \le \C \|\nabla w\| _{L^\infty(\partial\Omega)}^2,
\end{align*}
where $N(\nabla u) $ denotes the non-tangential maximal function of $\nabla u$ on boundary. Consequently, we deduce the weighted estimate
\begin{align*}
  \|D^2w\|_{L^1(\Omega;d^a)} = &\  \left|\int_\Omega |D^2 w(x)| 
    d^{\frac{1}{2}}(x)  d^{a-\frac{1}{2}}(x)
    \diff x   \right| \\
    \le &\   \C  \sqrt{\int_\Omega |D^2 w(x)| ^2
    d(x)  
    \diff x   }  \cdot \sqrt{\int _\Omega d^{2a-1}\diff x}\\
    \le &\  \C \|\nabla w\| _{L^\infty(\partial\Omega)} \cdot \sqrt{\int _\Omega d^{2a-1}\diff x}<\infty,
\end{align*}
since  $a>0$ implies $2a-1>-1$, ensuring the integrability of the weight. Combining the above results completes the proof.
\end{proof}

In the following lemma, we take preparatory steps toward the application of a fixed point argument. Specifically, we derive estimates for the terms appearing on the right-hand side in divergence form, as well as for their differences, all within the weighted function space framework.

\begin{lemma}\label{lemm:wprem12}
 Let $p\in (2,\infty), a\in \big(-\frac 1p, 1-\frac{2}{p}\big)$  and $ i,j,\ell, k\in\N_0\setminus\{0\}$ and $u\in W^{2,a}_p({\Omega})$ such that $\|\nabla u\|_{L^\infty({\Omega})}\le1$.
 
 Then there exists a  constant $\Cl{whoelder1a}=\Cr{whoelder1a}(\Omega, p,a)$ such that 
 \begin{align}
      \begin{aligned}
    \big\|D^2 u \star D^2  u \star  Q^{-j}P_{i}(\nabla u)  \big\|_{L^{p/2}(\Omega;d^{ap})} 
   \ &\
    \le \Cr{whoelder1a}\| D^2 u \|^2_{L^{p}({\Omega};d^{ap})} \|\nabla u\|_{L^\infty({\Omega})},
    \\
    \big\|D^2 u \star  Q^{-j}(1+Q)^{-\ell}P_{k}(\nabla u)  \big\|_{L^{p}({\Omega};d^{ap})} 
    \ &\  \le \Cr{whoelder1a}  
         \|D^2 u \|_{L^{p}({\Omega};d^{ap})} \|\nabla u  \|_{L ^\infty({\Omega})} 
.
\end{aligned} \label{eq:westvorlp1a} 
 \end{align}
Additionally, suppose $w\in W^{2,a}_p({\Omega})$ such that $\|\nabla w\|_{L^\infty({\Omega})}\le 1$ then it follows
\begin{align}
\begin{aligned}
    \Big\| D^2 u \star &\ D^2  u \star  Q^{-j}(u)P_{i}(\nabla u)  
\    -  D^2 w \star D^2 w  \star  Q^{-j}(w)P_{i}(\nabla w)  \Big\|_{L^{p/2}(\Omega;d^{ap})} 
    \\
    \le &\ \Cr{whoelder1a}   \| u-w \|_{W^{2,a}_p(\Omega)} \Big(  \| D^2u \|_{L^p(\Omega;d^{ap})}^2+ \| D^2w \|_{L^{p}(\Omega;d^{ap})} ^2\Big) 
\end{aligned} \label{eq:westvorlp2a}
\end{align}
as well as in case $k \ge 2$
\begin{align}
      \begin{aligned}
    \big\|D^2 u \ \star    &\ Q^{-j}(1+Q)^{-\ell}(u)P_{k}(\nabla u)
     -
    D^2 w \star  Q^{-j}(1+Q)^{-\ell}(w)P_{k}(\nabla w)  \big\|_{L^{p}({\Omega};d^{ap})} 
    \\
        \le &\ \Cr{whoelder1a}   \|u-w \|_{W^{2,a}_p({\Omega})} 
       \left(
 \|\nabla u\| _{L^\infty({\Omega})} +\|\nabla w\| _{L^\infty({\Omega})} 
               \right)   
               \left(
 \| u\| _{W^{2,a}_p(\Omega)} +\|w\| _{W^{2,a}_p(\Omega)} 
               \right)
.
\end{aligned} \label{eq:westvor21a}
 \end{align}
\end{lemma}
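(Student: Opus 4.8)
The plan is to prove all four estimates by the same elementary mechanism: expand the $\star$-notation into finitely many scalar monomials, bound each monomial pointwise by $|D^2u|^k \cdot (\text{bounded factors})$ using $|\nabla u|\le 1$ and $Q\ge 1$, and then apply H\"older's inequality in the weighted Lebesgue spaces to redistribute the weight $d^{ap}$. Throughout I would freely use that $Q=\sqrt{1+|\nabla u|^2}\in[1,\sqrt2]$ whenever $\|\nabla u\|_{L^\infty}\le 1$, so all factors $Q^{-j}$ and $(1+Q)^{-\ell}$ are bounded above and below by absolute constants; likewise $P_i(\nabla u)$ is bounded by $\|\nabla u\|_{L^\infty}^i\le \|\nabla u\|_{L^\infty}$ (keeping one factor explicit and absorbing the remaining $i-1$ into the constant, since $i\ge 1$), and similarly $P_k(\nabla w)$ with $k\ge 1$.

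For the two static estimates in \eqref{eq:westvorlp1a}: in the first, I pull out the two $D^2u$ factors and one $\nabla u$ factor, leaving an absolutely bounded remainder, so that pointwise $|D^2u\star D^2u\star Q^{-j}P_i(\nabla u)|\le C|D^2u|^2|\nabla u|$; raising to the power $p/2$, integrating against $d^{ap}$, and applying H\"older with exponents $(2,2)$ to split $|D^2u|^p d^{ap} = (|D^2u|^{p/2}d^{ap/2})(|D^2u|^{p/2}d^{ap/2})$ — or more directly just noting $\||D^2u|^2|\nabla u|\|_{L^{p/2}(\Omega;d^{ap})}\le \|\nabla u\|_{L^\infty}\||D^2u|^2\|_{L^{p/2}(\Omega;d^{ap})} = \|\nabla u\|_{L^\infty}\|D^2u\|_{L^p(\Omega;d^{ap})}^2$ — yields the claim. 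The second is even simpler: pointwise $|D^2u\star Q^{-j}(1+Q)^{-\ell}P_k(\nabla u)|\le C|D^2u||\nabla u|$ and one integrates directly in $L^p(\Omega;d^{ap})$.

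For the difference estimates \eqref{eq:westvorlp2a} and \eqref{eq:westvor21a}, the standard device is a telescoping decomposition: write $A_1A_2A_3-B_1B_2B_3 = (A_1-B_1)A_2A_3 + B_1(A_2-B_2)A_3 + B_1B_2(A_3-B_3)$ (and analogously for the two-factor product in \eqref{eq:westvor21a}), where the $A_i$'s are the factors $D^2u$, $D^2u$, $Q^{-j}(u)P_i(\nabla u)$ and the $B_i$'s the corresponding factors for $w$. The first two terms are handled exactly as in the static case, with the difference of second derivatives $D^2u-D^2w$ controlled by $\|u-w\|_{W^{2,a}_p(\Omega)}$ and the remaining $D^2$-factor contributing $\|D^2u\|_{L^p(\Omega;d^{ap})}$ or $\|D^2w\|_{L^p(\Omega;d^{ap})}$. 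The genuinely new ingredient is estimating the difference of the bounded factors $Q^{-j}(u)P_i(\nabla u) - Q^{-j}(w)P_i(\nabla w)$: here one uses that the map $\xi\mapsto (1+|\xi|^2)^{-j/2}(\xi\star\cdots\star\xi)$ is $C^1$ with derivative bounded on the ball $\{|\xi|\le 1\}$, so by the mean value theorem this difference is pointwise $\lesssim |\nabla u-\nabla w| \lesssim$ (via Lemma~\ref{lem:cemb}(c), or simply H\"older bounding $\|\nabla u-\nabla w\|_{L^\infty}$ by a dimensional constant times $\|u-w\|_{W^{2,a}_p(\Omega)}$, since that embedding holds under the hypotheses $p>2$, $a<1-2/p$) $\|u-w\|_{W^{2,a}_p(\Omega)}$; this term then carries the factor $|D^2w|^2$ (or $|D^2w|$) which integrates to the squared norm (resp.\ norm). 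Summing the three pieces and absorbing constants gives \eqref{eq:westvorlp2a}; \eqref{eq:westvor21a} is identical with a two-term telescoping and the extra hypothesis $k\ge 2$ ensuring that after extracting one factor $\nabla u-\nabla w$ in the $P_k$-difference there is still a $\nabla u$ or $\nabla w$ left to produce the factor $\|\nabla u\|_{L^\infty}+\|\nabla w\|_{L^\infty}$ in the stated bound.

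The main obstacle — really the only point requiring care rather than bookkeeping — is the Lipschitz-in-$u$ control of the nonlinear coefficient functions $Q^{-j}(1+Q)^{-\ell}P_k(\nabla u)$: one must verify that the relevant $C^1$ bounds for the elementary algebraic functions of $\nabla u$ are uniform on $\{|\nabla u|\le 1\}$ (which they are, since $Q$ stays in $[1,\sqrt2]$ and never vanishes there) and, crucially, that the $L^\infty$ difference $\|\nabla u-\nabla w\|_{L^\infty(\Omega)}$ is controlled by $\|u-w\|_{W^{2,a}_p(\Omega)}$ — this is exactly the content of the embedding $W^{2,a}_p(\Omega)\hookrightarrow C^1(\overline\Omega)$ from Lemma~\ref{lem:cemb}(c), applied to $u-w$, valid precisely because $p>2$ and $a\in(-\tfrac1p,1-\tfrac2p)$. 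Everything else is a finite sum of H\"older estimates with the weight $d^{ap}$ distributed trivially, since all the non-$D^2u$ factors are in $L^\infty$.
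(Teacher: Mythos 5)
Your proposal is correct and follows essentially the same route as the paper: pointwise monomial bounds using $Q\ge1$ and $\|\nabla u\|_{L^\infty}\le1$, weighted H\"older to distribute $d^{ap}$, the telescoping decomposition of the differences, and the embedding $W^{2,a}_p(\Omega)\hookrightarrow C^1(\overline\Omega)$ from Lemma~\ref{lem:cemb}~(c) to control $\|\nabla u-\nabla w\|_{L^\infty}$. The only cosmetic difference is that you invoke the mean value theorem for the smooth coefficient map $\xi\mapsto(1+|\xi|^2)^{-j/2}P_k(\xi)$ on the unit ball, whereas the paper bounds the differences $Q^{-j}(u)-Q^{-j}(w)$ and $P_i(\nabla u)-P_i(\nabla w)$ by explicit algebraic factorizations yielding the Lipschitz constants $j$ and $i$ directly; both deliver the same pointwise bound.
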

\begin{proof}
\noindent\cb 1 We begin by analyzing the estimate \eqref{eq:westvorlp1a}.
The inequality $Q=\sqrt{1+|\nabla u|^2}\ge 1$ and $\|\nabla u\|_{L^\infty({\Omega})}\le1$ yield
 \begin{align*}
 \begin{aligned}
    \big|D^2 u \star D^2  u \star  Q^{-j}P_{i}(\nabla u)  \big| &\le
    \C | D^2 u |^2 \cdot \left|{|\nabla u|^{i}}{\sqrt{1+|\nabla u|^2}^{(-j)}}\right|
   \\
    &
    \le  \C | D^2 u |^2 \cdot \|\nabla u\|_{L^\infty(\Omega)},\\
 \big|D^2 u \star  Q^{-j}(1+Q)^{-\ell}P_{k}(\nabla u)   \big|
     &\le \C | D^2 u | \cdot \left|{|\nabla u|^{k}}{\sqrt{1+|\nabla u|^2}^{(-j)}}\big(1+ {\sqrt{1+|\nabla u|^2}^{(-j)}} \big)^{-\ell}\right|\\
   & \le \C
         |D^2 u | \cdot \|\nabla u  \|_{L^\infty({\Omega})} .
\end{aligned} 
    \end{align*}
Applying $L^{p/2}(\Omega;d^{ap})$-norm on both sides of the first inequality and respectively $L^{p}(\Omega;d^{ap})$-norm for the second inequality, we obtain the estimates in \eqref{eq:westvorlp1a}.  

\noindent\cb 2 
In the next step,  we rewrite the following difference
\begin{align*}
    D^2 u \star D^2 & u \star  Q^{-j}(u)P_{i}(\nabla u)  
    -\ D^2 w \star D^2 w \star  Q^{-j}(w)P_{i}(\nabla w) 
    \\ = &\ (D^2 u -D^2 w)\star D^2  u \star  Q^{-j}(u)P_{i}(\nabla u)  
    +\ D^2 w \star (D^2 u - D^2 w) \star  Q^{-j}(u)P_{i}(\nabla u) \\
    &\ +\ D^2 w\star D^2 w \star \big( Q^{-j}(u)- Q^{-j}(w) \big)P_{i}(\nabla u)  
   \\
   &\ \quad  +\ D^2 w \star D^2 w \star Q^{-j}(w) \big(P_{i}(\nabla u)-P_i(\nabla w)\big). 
\end{align*}
Using again that $\| Q^{-j}\|_{L^\infty({\Omega})}  \le 1 $ as well as $\|\nabla u\|_{L^\infty({\Omega})},\|\nabla w\|_{L^\infty({\Omega})}\le 1$, we can show \eqref{eq:westvorlp2a}
\begin{align}
\begin{aligned}
    \big| D^2 u \star D^2 & u \star  Q^{-j}(u)P_{i}(\nabla u)  
    -\ D^2 w \star D^2 w \star  Q^{-j}(w)P_{i}(\nabla w) \big| 
    \\ \le &\    \big| (D^2u-D^2w) \star D^2 u\star \nabla u\big|
    +  \big|D^2 w \star (D^2 u-D^2w )\star\nabla u \big|\\
    & + j \big| D^2 w \star D^2 w\star (\nabla u- \nabla w) \big|
     + i \big| D^2 w \star D^2 w\star (\nabla u- \nabla w) \big|.
\end{aligned}
\end{align}
Here, we have used that the differences $Q^{-j}(w)-Q^{-j}(u)$ and $P_{i}(\nabla u)-P_i(\nabla w)$ can be estimated linearly in terms of  $|\nabla w - \nabla u|$. Indeed, it holds that:
\begin{align*}
   \big| Q^{-j}(u)-Q^{-j}(w) \big| \le & \ \big| \nabla u - \nabla w \big| \left|\frac{\big(\nabla u +\nabla w\big)}{\Big(Q(u) + Q(w)\Big)}\sum_{\ell=1}^{j}\frac{Q^{\ell-1}(u)Q^{j-\ell}(w)}{Q^j(u) Q^j(w)}\right|
   \le j\big| \nabla u - \nabla w \big|,\\
    \big| P_i(u)-P_i(u) \big| \le & \ \big| \nabla u - \nabla w \big| \left|\sum_{\ell=1}^{i}P_{\ell-1}(u)P_{i-\ell}(w)\right|
   \le i\big| \nabla u - \nabla w \big|.
\end{align*}
We conclude by applying the $L^{p/2}(\Omega;d^{ap})$-norm on both side and obtain
\begin{align*}
    \Big\| D^2 u \star D^2 & u \star  Q^{-j}(u)P_{i}(\nabla u)  
    -\ D^2 w \star D^2 w \star  Q^{-j}(w)P_{i}(\nabla w) \Big\|_{L^{p/2}(\Omega;d^{ap})} 
    \\
    \le &\ \C  \left(
        \| \nabla u D^2u \|_{L^{p}(\Omega;d^{ap})} +  \| \nabla u D^2w \|_{L^{p}(\Omega;d^{ap})}
\right)\|D^2 w - D^2 u\| _{L^{p}(\Omega;d^{ap})} 
 \\
  &  +  \C  \big( \| D^2w \|_{L^{p}(\Omega;d^{ap})}^2\big)\|\nabla w-\nabla u\| _{L^\infty({\Omega})}. 
\end{align*}
By Hölder's inequality $\|D^2 w  D^2 u\| _{L^{p/2}(\Omega;d^{ap})}\le \|D^2 u \| _{L^{p}(\Omega;d^{ap})}\|D^2 w\| _{L^{p}(\Omega;d^{ap})}$, Young's inequality and Sobolev embedding  $(u-w)\in \mathring W^{2,a}_p(\Omega)\hookrightarrow\hookrightarrow C^1(\overline{\Omega})$, we finish the proof of \eqref{eq:westvorlp2a}. In particular, we have:
\begin{align*}
    \|\nabla w-\nabla u\| _{L^\infty({\Omega})}& \le \C \| w- u\| _{W^{2,a}_p(\Omega)}, 
    \\ 
    \| \nabla u D^2u \|_{L^{p}(\Omega;d^{ap})} & \le \C \| u\| _{W^{2,a}_p(\Omega)}\cdot \| D^2u \|_{L^{p}(\Omega;d^{ap})}.
\end{align*}

\noindent\cb 3 Let dedicate ourselves to the proof of the last estimate \eqref{eq:westvor21a}. The argument follows a similar structure as in the previous step. Here we use $k\ge 2$ and apply the weighted $L^p(\Omega;d^{ap})$-norm on both sides of the inequality: 
\begin{align*}
\begin{aligned}
    \big| D^2 u\  \star   Q^{-j}(1+Q)^{-\ell}(u)P_{k}(\nabla u)
     & - D^2 w \star Q^{-j}(1+Q)^{-\ell}(w)P_{k}(\nabla w) \big| 
    \\ \le &\   
      \big|(D^2u-D^2w)\star \nabla u \star \nabla u \big| + j  \big|D^2 w \star (\nabla u-\nabla w)\star \nabla u \big| \\
    & +  \ell \big|D^2 w \star (\nabla u-\nabla w)\star \nabla u \big|
    +   \big|D^2 w  \star(\nabla u-\nabla w) \star \nabla u\big| \\
    &\ + (k-1)\big|D^2 w   \star \nabla w\star(\nabla u-\nabla w)\big|.
\end{aligned} 
\end{align*}
From this, we directly deduce:
\begin{align*}
        \big\| D^2 u \star   Q^{-j}& (1+Q)^{-\ell}(u)  P_{k}  (\nabla u)
    -  D^2 w \star Q^{-j}(1+Q)^{-\ell}(w)P_{k}(\nabla w) \big\|_{L^p(\Omega;d^{ap})} 
    \\ \le &\ \C
      \|\nabla u \|_{L^\infty({\Omega})}^2
     \cdot
      \|D^2u-D^2w\|_{L^{p}(\Omega;d^{ap})}  \\
       &\ + \C
      \left(\|\nabla u \|_{L^\infty(\Omega)}
      +\|\nabla w \|_{L^\infty(\Omega)}\right)
            \cdot
            \|D^2 w \|_{L^{p}(\Omega;d^{ap})}
            \cdot
      \|\nabla u- \nabla w\|_{L^\infty(\Omega)}
.
\end{align*}
The proof is completed by applying the compact Sobolev embedding
${W^{2,a}_p(\Omega)}\hookrightarrow\hookrightarrow C^1(\overline{\Omega})$ for $p>2$, which yields the control of the $L^\infty$-norm of gradients by the full $W^{2,a}_p(\Omega)$-norm.
\end{proof}

\subsection{Main Weighted Theorem}
\label{subsec:mainweighted}

At this point, we are prepared to state the main result of this subsection. Our aim is to establish the existence of a solution to the Willmore equation in variational form, as reformulated in Lemma~\ref{lem:willrewriten}, for small Dirichlet boundary data. Specifically, we consider the boundary value problem
\begin{align}
\begin{cases}
\Delta^2 u = D_{i} b^{i}_1[u] + D^2_{ij} b^{ij}_2[u] &\ \text{ in }  \ \Omega, \\
u= g_0, \quad  \partial_\nu u =  g_1 &\text{ on } \  \Omega.
\end{cases}    \label{eq:willmoreeq1}
\end{align}
where the nonlinear right-hand side satisfies the structural conditions \eqref{eq:bi} 
\begin{align}
\begin{aligned}
b_1[u] =&\ D^2 u \star D^2 u \star \sum_{k=1}^3 Q^{-2k-1}P_{2k-1}(\nabla u),   \\
    b_2[u]=&\ D^2 u \star \sum_{k=1}^2 Q^{-2k-1}P_{2k}(\nabla u) + D^2 u \star P_2 (\nabla u) \star (Q(1+Q))^{-1}. 
\end{aligned}  \label{eq:willmoreright}
\end{align}

\begin{thm}\label{thm:wlpexistweight}
Assume that $\Omega\subset \R^2$ is a bounded Lipschitz domain with an exterior normal vector field $\nu$. Additionally, suppose
\begin{itemize}
    \item  $p\in(2,\infty), a\in \Big(0,1-\frac{2}{p}\Big)$ with $s:=1-a-\frac{1}{p}$,
    \item $\nu$ satisfies the condition \eqref{eq:condnuA},
    \item $g:=\{g_0,g_1\}\in \dot W^{1+s}_p(\partial\Omega)$ such that $g_0, \nu g_1+\nabla_{\rm tan} g_0\in L^{\infty}(\partial\Omega)$,
    \item $\|g\|_{\dot W^{1+s}_p(\partial\Omega)} <K$ for some $K>0$.
\end{itemize}

Then there exists a constant $\delta=\delta(\Omega,K,p,a)>0$ such that if 
\begin{align*}
    \|\nu g_1+\nabla_{\rm tan} g_0\|_{L^\infty(\Omega)}<\delta,
\end{align*}
then there exists a variational solution $u\in {W^{2,a}_p(\Omega)}\cap C^\infty(\Omega)$ to the Willmore-type Dirichlet problem, thus $u$ solves \eqref{eq:willmoreeq1} with the right-hand side \eqref{eq:willmoreright}.
\end{thm}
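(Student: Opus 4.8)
The plan is to prove Theorem~\ref{thm:wlpexistweight} by a contraction mapping argument in a closed ball of the weighted Sobolev space $W^{2,a}_p(\Omega)$, using Proposition~\ref{weightschauderlpdiv} to solve the linearized (inhomogeneous biharmonic) problem and Lemma~\ref{lemm:wprem12} to control the nonlinear right-hand side and its Lipschitz dependence on $u$. Concretely, I would fix a radius $R>0$ (to be chosen small, depending on $\Omega,K,p,a$) and define
\begin{align*}
    \mathcal{B}_R := \left\{ u\in W^{2,a}_p(\Omega) \ \big| \ \operatorname{Tr}_1(u)=g,\ \|\nabla u\|_{L^\infty(\Omega)}\le 1,\ \|D^2 u\|_{L^p(\Omega;d^{ap})}\le R \right\},
\end{align*}
which is a closed, convex, nonempty subset of $W^{2,a}_p(\Omega)$ once $\|g\|_{\dot W^{1+s}_p(\partial\Omega)}<K$ and $\delta$ are small enough (nonemptiness follows from the extension operator $\operatorname{Ext}_1$ together with the embedding $W^{2,a}_p(\Omega)\hookrightarrow C^1(\overline\Omega)$ from Lemma~\ref{lem:cemb}~\cbm c, which makes $\|\nabla(\operatorname{Ext}_1 g)\|_{L^\infty}$ small when the boundary data are small). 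On $\mathcal{B}_R$ I define the map $\mathcal{T}$ by letting $\mathcal{T}(u)$ be the unique variational solution, given by Proposition~\ref{weightschauderlpdiv}, of the biharmonic Dirichlet problem with right-hand side $D_i b_1^i[u] + D^2_{ij} b_2^{ij}[u]$ and boundary data $g$; here I take the exponent $t=p/2$ in Proposition~\ref{weightschauderlpdiv}, which is admissible since $b_1[u]\in L^{p/2}(\Omega;d^{ap})$ by the first estimate in \eqref{eq:westvorlp1a} and one checks $p/2\in\big(\tfrac{2p}{2+p-ap},p\big]$ for $p>2$ and $a<1-\tfrac2p$.

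The core estimates are then: (i) \emph{self-mapping}, $\mathcal{T}(\mathcal{B}_R)\subseteq\mathcal{B}_R$, and (ii) \emph{contraction}, $\|\mathcal{T}(u)-\mathcal{T}(w)\|_{W^{2,a}_p(\Omega)}\le \tfrac12\|u-w\|_{W^{2,a}_p(\Omega)}$ for $u,w\in\mathcal{B}_R$. For (i), apply the three estimates of Proposition~\ref{weightschauderlpdiv} with $h_1=b_1[u]$, $h_2=b_2[u]$: by \eqref{eq:westvorlp1a} we get $\|b_1[u]\|_{L^{p/2}(\Omega;d^{ap})}\le \Cr{whoelder1a}\|D^2 u\|^2_{L^p(\Omega;d^{ap})}\|\nabla u\|_{L^\infty}\le \Cr{whoelder1a}R^2\|\nabla u\|_{L^\infty}$ and $\|b_2[u]\|_{L^p(\Omega;d^{ap})}\le \Cr{whoelder1a}\|D^2u\|_{L^p(\Omega;d^{ap})}\|\nabla u\|_{L^\infty}\le \Cr{whoelder1a}R\|\nabla u\|_{L^\infty}$, so the $W^{2,a}_p$-estimate gives $\|\mathcal{T}(u)\|_{W^{2,a}_p}\le \Cr{lpestimate}K + \Cr{w3202}\Cr{whoelder1a}(R^2+R)\|\nabla u\|_{L^\infty}$; the crucial point is that the gradient bound $\|\nabla\mathcal{T}(u)\|_{L^\infty}\le \Cr{miranda1}\delta + \Cr{w3202}\Cr{whoelder1a}(R^2+R)\|\nabla u\|_{L^\infty}$ uses the small-boundary-data quantity $\|\nu g_1 + \nabla_{\rm tan}g_0\|_{L^\infty}<\delta$ rather than $K$, and the quadratic/linear smallness in $R$ lets us first fix $R$ small so that $\Cr{w3202}\Cr{whoelder1a}(R^2+R)\le \tfrac12$, then shrink $\delta$ so that $\Cr{miranda1}\delta\le\tfrac12$, giving $\|\nabla\mathcal{T}(u)\|_{L^\infty}\le 1$; similarly $R$ must also dominate $\Cr{w3202}\Cr{whoelder1a}(R^2+R)$ times the (bounded) gradient plus a term from $\|g\|$ — one has to be slightly careful that the $W^{2,a}_p$-part of the $D^2$-bound is controlled, which is why $R$ should also be chosen $\ge 2\Cr{lpestimate}K$ or the self-map ball should be stated in terms of a mixed norm; I would phrase $\mathcal{B}_R$ with the full $W^{2,a}_p$-norm bounded by $R$ and separately impose $\|\nabla u\|_{L^\infty}\le1$, adjusting constants accordingly. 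For (ii), apply the $W^{2,a}_p$-estimate of Proposition~\ref{weightschauderlpdiv} to $\mathcal{T}(u)-\mathcal{T}(w)$, whose right-hand side is $D_i(b_1^i[u]-b_1^i[w]) + D^2_{ij}(b_2^{ij}[u]-b_2^{ij}[w])$ with zero boundary data, and bound the differences by \eqref{eq:westvorlp2a} and \eqref{eq:westvor21a}, obtaining $\|\mathcal{T}(u)-\mathcal{T}(w)\|_{W^{2,a}_p}\le \Cr{w3202}\Cr{whoelder1a}\|u-w\|_{W^{2,a}_p}\big(R^2 + R(\|\nabla u\|_{L^\infty}+\|\nabla w\|_{L^\infty})\big)$, which is $\le\tfrac12\|u-w\|_{W^{2,a}_p}$ once $R$ is small (note every difference estimate in Lemma~\ref{lemm:wprem12} is linear in $\|u-w\|_{W^{2,a}_p}$ and carries at least one extra factor of $R$ or a gradient $L^\infty$-norm, both small).

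By the Banach fixed point theorem there is a unique $u\in\mathcal{B}_R$ with $\mathcal{T}(u)=u$, which by construction is a variational solution of \eqref{eq:willmoreeq1}–\eqref{eq:willmoreright} lying in $W^{2,a}_p(\Omega)$. Finally, interior smoothness $u\in C^\infty(\Omega)$ follows by a standard bootstrap: on any ball $B\Subset\Omega$ the weight $d$ is comparable to a positive constant, so $u\in W^{2,2}_{\rm loc}(\Omega)$ and, by the $C^1$-embedding, $\nabla u\in L^\infty_{\rm loc}$; the right-hand side $D_i b_1^i[u]+D^2_{ij}b_2^i[u]$ then has the regularity dictated by $u$, and since the equation is, locally, a fourth-order elliptic equation (the biharmonic operator) with right-hand side depending polynomially on $D^2u,\nabla u,Q^{-1}$, elliptic interior $L^p$/Schauder estimates and iteration upgrade $u$ successively to $W^{4,p}_{\rm loc}$, $C^{4+\alpha}_{\rm loc}$, and then to $C^\infty(\Omega)$; I would cite the interior regularity theory for quasilinear fourth-order elliptic systems or simply invoke the analogous interior-regularity argument for the Willmore equation already in the literature. \textbf{The main obstacle} I anticipate is the careful bookkeeping in step (i): one must simultaneously keep $\|\nabla u\|_{L^\infty}\le1$ (needed for Lemma~\ref{lemm:wprem12} to even apply) and close the $D^2$-bound, and the two smallness parameters $R$ and $\delta$ enter asymmetrically — $R$ governs the nonlinear self-improvement while $\delta$ (not $K$) must absorb the Agmon–Miranda constant $\Cr{miranda1}$ in the gradient estimate — so the order of choosing constants ($R$ first, then $\delta$) is essential and should be spelled out explicitly.
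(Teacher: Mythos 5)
Your overall strategy --- contraction mapping in a closed subset of $W^{2,a}_p(\Omega)$, with Proposition~\ref{weightschauderlpdiv} solving the linearized problem and Lemma~\ref{lemm:wprem12} estimating the divergence terms and their Lipschitz dependence --- is the same as the paper's. However, there is a genuine gap in your bookkeeping that you flag as ``the main obstacle'' but do not resolve, and which is in fact fatal for the contraction step as you set it up.

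The issue is that $K$ is \emph{not} assumed small in the theorem; it is merely an a priori bound on $\|g\|_{\dot W^{1+s}_p(\partial\Omega)}$, and $\delta$ is allowed to depend on $K$. Your ball requires $\|D^2u\|_{L^p(\Omega;d^{ap})}\le R$ (or, in the ``mixed norm'' variant you suggest, $\|u\|_{W^{2,a}_p}\le R$), yet the biharmonic extension of $g$ alone already forces $R\gtrsim \Cr{lpestimate}K$. Meanwhile, the contraction bound you quote from \eqref{eq:westvorlp2a},
\begin{align*}
\big\|b_1[u]-b_1[w]\big\|_{L^{p/2}(\Omega;d^{ap})}
\le \Cr{whoelder1a}\,\|u-w\|_{W^{2,a}_p(\Omega)}\Big(\|D^2u\|^2_{L^p(\Omega;d^{ap})}+\|D^2w\|^2_{L^p(\Omega;d^{ap})}\Big),
\end{align*}
carries \emph{no} factor of $\|\nabla u\|_{L^\infty}$ or $\|\nabla w\|_{L^\infty}$: it is pure $R^2\sim K^2$, which cannot be made $\le \tfrac12$ by shrinking $\delta$ when $K$ is fixed but large. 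So your contraction step fails for exactly the case the theorem is designed to handle.

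The paper closes this gap by tracking a third quantity in the iteration set, namely $\|D^2u\|_{L^1(\Omega;d^a)}\le 2\Cr{w32021}\delta$, which Proposition~\ref{weightschauderlpdiv} controls directly by $\|\nu g_1+\nabla_{\rm tan}g_0\|_{L^\infty(\partial\Omega)}<\delta$ rather than by $K$ (this is the point of the Dahlberg square-function estimate in step \cbm 3 of its proof). Then, for the contraction, it does not apply \eqref{eq:westvorlp2a} at $t=p/2$, but at a strictly smaller intermediate exponent $t\in\big(\tfrac{2p}{2+p-ap},\tfrac{p}{2}\big)$ and interpolates
\begin{align*}
    \|D^2u\|_{L^{2t}(\Omega;d^{2at})}\le \|D^2u\|_{L^1(\Omega;d^a)}^{\alpha}\,\|D^2u\|_{L^p(\Omega;d^{ap})}^{1-\alpha}\lesssim \delta^{\alpha}K^{1-\alpha},
\end{align*}
which does go to zero as $\delta\to 0$ with $K$ fixed. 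Without both the $L^1(\Omega;d^a)$-bound in the ball and the $L^1$--$L^p$ interpolation at an intermediate exponent, one cannot decouple ``large'' boundary data measured in $\dot W^{1+s}_p$ from ``small'' data measured in $L^\infty$. Your remaining steps --- the gradient $L^\infty$ closure, use of the three estimates of Proposition~\ref{weightschauderlpdiv}, and the interior bootstrap to $C^\infty(\Omega)$ --- are fine.
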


Let us briefly discuss the conditions on the parameters $p$ and $a$ in the context of the weighted Sobolev spaces. To ensure ellipticity and the applicability of weighted Sobolev estimates for the right-hand side, we require an upper bound on the $C^1(\overline{\Omega})$-norm of $u$. For this reason, we impose the condition $0\le a\le 1-2/p$ (with the spatial dimension  $n=2$), which allows us to apply the compact embedding $  \mathring{W}^{1,a}_p(\Omega) \hookrightarrow \hookrightarrow  C^{\lambda}(\overline{\Omega})$ from Lemma~\ref{lem:cemb} \cbm c. This, in turn, enables us to control the supremum norm of $\nabla u$.

\begin{proof}
The central idea of the proof is to apply a fixed-point argument based on linearization. To this end, we modify the quasilinear Willmore equation into a linear one by freezing all derivatives except for the biharmonic part. This allows us to formulate a fixed-point problem corresponding to an iterative solution of a linear elliptic differential equation with prescribed Dirichlet data, whose fixed point coincides with the solution of the original Willmore equation.
The iteration is carried out within a closed subset of a weighted Sobolev space  $W^{2,a}_p(\Omega)$. To ensure the validity of this approach, we need to check that the assumptions of the Banach fixed-point theorem are satisfied, which can be achieved by a smallness condition.
The structure of the proof is as follows:
\begin{itemize}
	\item[\cbm1] \emph{Definition of the iteration mapping $G$ as well as the right  iteration set $\M\subset W^{2,a}_p(\Omega)$},
	\item[\cbm2] \emph{The iteration mapping is a self-mapping}: $G\colon \M\to\M$,
	\item[\cbm3] \emph{The iteration mapping is a contraction}: $\exists q\in(0,1) \forall u,w\in\M:$ $\|G(u)-G(w)\|\le q\|u-w\|, $
	\item[\cbm4] \emph{Application of the fixed point theorem} to infer the existence of the fixed point.
\end{itemize}

\noindent\cb{1} \textbf{ Definition of the iteration mapping \& set}\\
We begin by defining the iteration mapping $ G$ by assigning to each $v\in W^{2,a}_p(\Omega)$ the function $ G(v)$ as the  solution $w\in W^{2,a}_p(\Omega)$ to the boundary problem 
\begin{align}
    \begin{cases}
   \Delta^2w =   D_{i} b^{i}_1[v] + D^2_{ij} b^{ij}_2[v] \quad \text{ in }  \ \Omega, \\
    \displaystyle\frac{\partial^k u}{\partial \nu^k} = g_k \quad \text{ on }\ \partial \Omega, \quad 0\le k\le 1.
    \end{cases} \label{wprobitmapw2}
\end{align}
Existence, regularity, and uniqueness are guaranteed by  Proposition~\ref{thm:prop1}.

More precisely, Proposition~\ref{thm:prop1} provides estimates for the $W^{2,p}_a(\Omega)$ and $ L^\infty(\Omega)$ norms of $w$. 
We then define the iteration set by
\begin{align*}
   \M_\delta^K := \left\{ u \in W^{2,a}_p(\Omega) \ \left| \
   \begin{aligned}
     \|\nabla u\|_{L^\infty(\Omega)} &\ \le 2 \Cr{miranda1} \delta, \ 
     \|D^2u\|_{L^1(\Omega;d^{a})}\le 2\Cr{w32021}(a,\Omega) \delta, \\  
     &\ \|u\|_{W^{2,a}_p(\Omega)} \le 2\Cr{lpestimate}(p,a,\Omega) K         
   \end{aligned}\
   \right.\right\},
\end{align*}
with $\delta>0$ representing a smallness parameter, to be determined by several conditions \eqref{eq:wc1}–\eqref{eq:wc5} stated below. By Fatou's lemma 
$\M_\delta^K $ is closed in $W^{2,a}_p(\Omega)$.

Later in the proof, for some $1<q<\infty$, we make use of  the interpolation space $L^q(\Omega; d^{aq})$, lying between the weighted spaces $L^1(\Omega; d^a)$ and $L^p(\Omega; d^{ap})$.  To ensure that the associated Sobolev space $W^{2,a}_q(\Omega)$ is well-defined, we must require 
 $a<1-\frac{2}{q}$. This motivates the choice $q> \frac{2}{1-a}$. At this point, we want to emphasize that $a$ is the same  as in $W^{2,a}_p(\Omega) $!
Since $a<1-\frac{2}{p}$, we set $ q\in\big(\frac{2}{1-a},p\big)$ by $q:= \frac{1}{2}\big(\frac{2}{1-a} + p\big)$ the arithmetic mean. 
Also, we observe that by $q> \frac{2}{1-a}>2$.

Moreover, since $\Omega$ is bounded, we have the compact embedding $W^{2,a}_p(\Omega) \hookrightarrow\hookrightarrow W^{2,a}_q(\Omega)  $ by Hölder's inequality.
In between,  we observe that there exists a power $\gamma:=\frac{p}{q}\frac{q-1}{p-1}\in(0,1)$ such that, like mentioned above, we can interpolate
\begin{align}
    \|D^2u\|_{L^q(\Omega;d^{aq})}\le \|D^2u\|_{L^1(\Omega;d^{a})} ^{1-\gamma}\|D^2u\|_{L^p(\Omega;d^{ap})} ^{\gamma}.\label{eq:int2L}
\end{align}
One of the reasons why we use this interpolation and the estimate for $ \|D^2u\|_{L^1(\Omega;d^{a})}$, is that especially $b_1[w]$  depends linearly on $\nabla w$. In the next step, we have to make sure that
\begin{align*}
    \|b_1[w] \|_{L^{q/2}(\Omega ;d^{aq})}  \le \Cr{miranda1}
     \|D^2 u \|^2_{L^{q}(\Omega ;d^{aq})} \|\nabla u\|_{L^\infty(\Omega)}\le \Cl{loli}(\delta) \delta,
\end{align*}
with some constant $\Cr{loli}$ depending on $\delta $ with $\Cr{loli}(\delta)\to 0$ for $\delta\to 0$ to get $G$ a self-mapping on $M_\delta^K$. Therefore, we estimate $\|D^2 u \|^2_{L^{q}(\Omega ;d^{aq})}\le C(\delta)$ via interpolation.
 
Finally, we introduce the first smallness condition on $\delta$
\begin{align}
   2 \Cr{miranda1}
    \delta_1 = 1, \tag{C1} \label{eq:wc1}
\end{align}
hence for all $\delta\le \delta_1$ we get $\|\nabla u\|_{L^\infty({\Omega})}\le1$ for all $u\in \M_\delta^K$, allowing us to apply  Lemma~\ref{lemm:wprem12} in the sequel.

\noindent\cb{2} \textbf{$G$ is a self-map}\\
In the next step, assume that $w\in \M^K_\delta $. We now aim to prove that $G(w)\in \M^k_\delta$, which is equivalent to verifying the following estimates:
\begin{align*}
     \|\nabla G(w)\|_{L^\infty(\Omega)} &\ \le 2 \Cr{miranda1} \delta, \ 
     \|D^2 G(w)\|_{L^1(\Omega;d^{a})}\le 2\Cr{w32021}(a,\Omega) \delta, \ \|G(w)\|_{W^{2,a}_p(\Omega)} \le 2\Cr{lpestimate}(p,a,\Omega) K. 
\end{align*}
To prepare for the application of Proposition~\ref{weightschauderlpdiv}, we must discuss the admissible values of $t$ for estimating the $b_1$-term. Specifically, we intend to justify the choice  $t=\frac p2$. Therefore, while $0< a< 1-\frac{2}{p}$ it follows that $1<\frac{p(1-a)}2$. We deduce 
 \begin{align}
 1<    \frac{2p}{2-ap+p} = \frac{p}{1+\frac{p(1-a)}{2}} <\frac{p}{2} =t. \label{eq:wpaest}
 \end{align}
By applying Proposition~\ref{weightschauderlpdiv} and Lemma~\ref{lemm:wprem12}, and using the condition $\| \nabla w\|_{L^\infty({\Omega})}\le1$ from $\eqref{eq:wc1}$  we estimate
\begin{align*}
    \|G(w)& 
    \| _{W^{2,a}_p(\Omega)}\\
     &\le 
    \Cr{lpestimate}(p,a,\Omega) 
   \|  
    g  \|_{ \dot W^{1+s}_p (\partial\Omega)} 
     + 
        \Cr{w3202}(p,a,\Omega)  \left(
   \big \|b_1[w] \big\|_{L^{p/2}(\Omega ;d^{ap})}
   +  \big\|b_2[w] \big\|_{L^{p}(\Omega ;d^{ap})}
    \right)
    \\
    &\le \Cr{lpestimate}
    K
    +
    \C(p,a,\Omega) \left( \big\| D^2w\big\|_{L^p(\Omega;d^{ap})}^2 \big\|\nabla w\|_{L^\infty(\Omega)}  
   + \big\| D^2w\big\|_{L^p(\Omega;d^{ap})} \big\|\nabla w\|_{L^\infty(\Omega)}      \right)\\
     &\le \Cr{lpestimate}   K  + \Cl{w3003}(p,a,\Omega)( K^2 \delta +  K \delta ).
 \end{align*}
 Therefore, we impose the second constraint on $\delta$ by choosing  $\delta_2$ such that 
 \begin{align}
    \Cr{w3003}(p, a, \Omega)( K^2 \delta_2 +K \delta_2 )= \Cr{lpestimate}(p,a,\Omega) K. \label{eq:wc2} \tag{C2} 
 \end{align}
From now on, we restrict to $\delta\in (0,\delta_2)$. This guarantees that $ \|G(w)\| _{W^{2,a}_p(\Omega)} \le 2\Cr{lpestimate}(p,a,\Omega)K$, thus verifying the third condition for membership in $\M_\delta^K$.

 For the second condition in the definition of $\M_\delta^K$, we again apply Proposition~\ref{weightschauderlpdiv}, this time using the estimate in the space  $W^{2,a}_q(\Omega)$, i.e., replacing $p$ by $q$ and setting $t=q/2$. Note that the required condition ${2q}/({2-aq+q}) <{q}/{2}$ is also satisfied. The corresponding estimate for   ${L^1(\Omega;d^a)} $ norm of $D^2G(w)$ then reads:
\begin{align*}
    \|D^2G(w)&\| _{L^1(\Omega;d^a)} \\
    &\le
     \Cr{w32021}(a,\Omega) 
   \| \nu g_1
   +
   \nabla_{\rm tan} g_0\|_{L^\infty(\partial\Omega)}
     \\
     &\ \qquad + 
        \Cr{w3202}(q, a, \Omega) \left(
    \big\|b_1[w]\big\|_{L^{q/2}(\Omega ;d^{2aq/2})}
   + \big\|b_2[w]\|_{L^{q}(\Omega ;d^{aq})}
    \right)
    \\
    &\le  
    \Cr{w32021}(a,\Omega)  
    \delta
    +
        \C\left( \big\| D^2w\big\|_{L^q(\Omega;d^{aq})}^2 \big\|\nabla w\|_{L^\infty(\Omega)}  
   + \big\| D^2w\big\|_{L^q(\Omega;d^{aq})} \big\|\nabla w\|_{L^\infty(\Omega)}      \right)\\
     &\overset{\eqref{eq:int2L}}{\le}\Cr{w32021}(a,\Omega)  
    \delta  + \Cl{wt03}(p,a,\Omega)\big((\delta^{1-\gamma}K^\gamma)^2 + \delta^{1-\gamma}K^\gamma \big)\delta.
 \end{align*}
 since $q$ and $\gamma=\frac{p}{q}\frac{q-1}{p-1}$ depends on $a $ and $p$. We now impose the third smallness condition by choosing $\delta_3$  such that 
 \begin{align}
     \Cr{wt03}(p,a,\Omega)\big((\delta_3^{1-\gamma}K^{\gamma})^2 + \delta_3^{1-\gamma}K^\gamma \big)
     =
     \Cr{w32021}(a,\Omega) . \tag{C3} \label{eq:wc3}
 \end{align}
From this point on, we restrict to $\delta\le \delta_3$, which ensures that $ \|G(w)\| _{W^{2,a}_p(\Omega)} \le 2 \Cr{w32021}(q,a,\Omega) \delta$, i.e., the second condition for membership in $\M_\delta^K$ is satisfied.
 
It remains to verify the gradient estimate. Again, by Proposition~\ref{weightschauderlpdiv} and Lemma~\ref{lemm:wprem12} it follows
 \begin{align*}
             \|\nabla G(w)\|_{L^\infty(\Omega)} &\le \Cr{miranda1}
   \| \nu g_1
   +
   \nabla_{\rm tan} g_0\|_{L^\infty(\partial\Omega)}
\\
&\ \quad  + 
        \Cr{w3202}(q,a,\Omega) \left(
     \big\|b_1[w] \big\|_{L^{q/2}(\Omega ;d^{aq})}
   +  \big\|b_2[w] \big\|_{L^{q}(\Omega ;d^{aq})}
    \right) \\
   &\le \Cr{miranda1} \delta  + \Cr{wt03}(q,a, \Omega)\big((\delta^{1-\gamma}K^\gamma)^2 + \delta^{1-\gamma}K^\gamma \big)\delta.
 \end{align*}
Therefore, we set the fourth condition by choosing $\delta_4$  such that 
\begin{align}
         \Cr{wt03}(q,a, \Omega)\big((\delta_4^{1-\gamma}K^\gamma)^2 + \delta_4^{1-\gamma}K^\gamma \big)=\Cr{miranda1}  . \tag{C4}\label{eq:wc4}
\end{align}
Combining the results from the previous steps, we conclude that the mapping $G\colon\M_\delta^K \to \M_\delta^K $ is a self map for all $\delta$ smaller than $\delta_1$, $\delta_2, \delta_3 $ and $ \delta_4$.

\noindent\cb{3} \textbf{ $G$ is a contraction}\\
The final property we want to verify is the contraction property. That is, we aim to find a constant  $q\in(0,1)$ such that for all $u,w\in \M_\delta^K $,  the following inequality holds: 
\[
 \|G (u)-G (w)\|_{W^{2,a}_p(\Omega)}  \le q \|u-w\|_{{W^{2,a}_p(\Omega)}}.
 \] We first observe that the difference $G(u)-G(w)$ is a variational solution of the boundary value problem
 \begin{align*}
\Delta^2\big (G (u)- G (w) \big) = D_i \big( b^i_1[u]- b^i_1[w] \big) + D^2_{ij} \big( b^{ij}_2[u] - b^{ij}_2[w] \big)  \quad \text{ in } \ \Omega, 
\end{align*}
in the class $\mathring W^{2,a}_p(\Omega)$. 

As in the previous step, we apply Proposition~\ref{weightschauderlpdiv} in combination with Lemma~\ref{lemm:wprem12}, using the bounds  $\| \nabla u\|_{L^\infty({\Omega})},\| \nabla w\|_{L^\infty({\Omega})}\le 1$  ensured by condition \eqref{eq:wc1}. 
In preparation, we recall from \eqref{eq:wpaest} that we may choose a parameter $t>0$ such that $ 1< \frac{2p}{2-ap+p}<t< \frac{p}{2}$ as required for Proposition~\ref{weightschauderlpdiv}. In particular, we select the arithmetic mean $t:=\frac{p}{2-ap+p}+ \frac{p}{4}$. The reason for choosing a different value of $t$ compared to the previous step is that, in Lemma~\ref{lemm:wprem12}, the right-hand side of the estimate \eqref{eq:westvorlp2a} does not involve $L^\infty(\Omega)$-norms of $\nabla u$ or $ \nabla w$, previously allowed us to exploit smallness. 

With this setup, we obtain the estimate:
\begin{align*}
    \|G (u)-G (w)\|_{W^{2,a}_p(\Omega)} &\ 
     \le 
             \Cr{w3202}(p,t,\Omega) \left(
    \|b_1[u]-b_1[w]\|_{L^{t}(\Omega ;d^{2at})}
   + \|b_2[u]-b_2[w]\|_{L^{p}(\Omega ;d^{ap})}
    \right).
\end{align*}
At this point, we estimate the two terms on the right-hand side separately. Starting with the $b_1$-term, we use estimate \eqref{eq:westvorlp2a} to obtain
\begin{align*}
\big\|b_1[u] &\ -b_1[w] \big\|_{L^{t}(\Omega;d^{2at})} 
\le \Cr{whoelder1a}   \| u-w \|_{W^{2,a}_{2t}(\Omega)} \Big(  \| D^2u \|_{L^{2t}(\Omega;d^{2at})}^2 + \| D^2w \|_{L^{2t}(\Omega;d^{2at})}^2 \Big).
\end{align*}
The terms $\| D^2u \|_{L^{2t}(\Omega;d^{2at})}^2 $ and $\| D^2w \|_{L^{2t}(\Omega;d^{2at})}^2 $ could not be made small enough if we had chosen $t=p/2$ as in the previous step. We therefore proceed by applying an  $L^p$-interpolation argument
\begin{align*}
\big\|b_1[u] &\ -b_1[w] \big\|_{L^{t}(\Omega;d^{2at})} \\
\le &\
\C \| u-w \|_{W^{2,a}_{2t}(\Omega)}\Big( \| D^2u \|_{L^{1}(\Omega;d^{a})}^{2\alpha} \| D^2u \|_{L^{p}(\Omega;d^{pa})} ^{2(1-\alpha)}+ \| D^2u \|_{L^{1}(\Omega;d^{a})}^{2\alpha} \| D^2u \|_{L^{p}(\Omega;d^{pa})} ^{2(1-\alpha)} \Big),   
\end{align*}
where  $\alpha\in(0,1)$  is chosen such that  $2t=\alpha  +(1-\alpha)p$ lies strictly between $1$ and $p$. Since $\Omega\subset\R^2$ is a bounded domain, we have the compact embedding $\mathring W^{2,a}_p(\Omega)\hookrightarrow\hookrightarrow \mathring W^{2,a}_{2t}(\Omega)$, which yields
\begin{align*}
\big\|b_1[u]-b_1[w] \big\|_{L^{t}(\Omega;d^{2at})}
\le &\ \Cl{wzzz} \| u-w \|_{W^{2,a}_p(\Omega)}  (\delta^{1-\gamma}K^\gamma)^{2\alpha} K ^{2(1-\alpha)} .
\end{align*}
Furthermore, let us estimate the $b_2$-terms using \eqref{eq:westvor21a}
\begin{align*}
 \big\| b_2[u]\ - &\ b_2[w]\big\|_{L^{p}(\Omega;d^{ap})} \\
 \ \le &\ \Cr{whoelder1a}    \|u-w \|_{W^{2,a}_p({\Omega})} 
       \left(
 \|\nabla u\| _{L^\infty({\Omega})} +\|\nabla w\| _{L^\infty({\Omega})} 
               \right)  
               \left(
 \| u\| _{W^{2,a}_p(\Omega)} +\|w\| _{W^{2,a}_p(\Omega)} 
               \right)\\
    \ \le &\ \C  \|u-w \|_{W^{2,a}_p({\Omega})} 
 \delta K.        
\end{align*}
Combining the two estimates above, we obtain for some constant $\Cl{w3schritt}= \Cr{w3schritt}(p,a,\Omega)$  the inequality
\begin{align*}
     \|G (u)-G (w)\|_{W^{2,a}_p(\Omega)}\le \Cr{w3schritt}\big( (\delta^{1-\gamma}K^\gamma)^{2\alpha} K ^{2(1-\alpha)}+ \delta K \big)   \|u-w\|_{W^{2,a}_p(\Omega)}.
\end{align*}
We now impose the fifth and final smallness condition by choosing $\delta_5=\delta_5(a,p,K, \Omega)$  such that
\begin{align}
    \Cr{w3schritt}\big( (\delta_5^{1-\gamma}K^\gamma)^{2\alpha} K ^{2(1-\alpha)} + \delta_5 K \big)   = \frac{1}{2}. \tag{C5} \label{eq:wc5}
\end{align}
Then, for any $\delta \le  \delta_5$ satisfying all the previous constraints \eqref{eq:wc1}, \eqref{eq:wc2}, \eqref{eq:wc3}, \eqref{eq:wc4}  we obtain for all $u,w\in \M^K_\delta$ the contraction estimate:
\begin{align*}
        \|G (u)-G (w)\|_{W^{2,a}_p(\Omega)} &\ \le \frac{1}{2} \|u-w\|_{{W^{2,a}_p(\Omega)}}.
\end{align*}
In conclusion, by choosing $\delta\le \min\{\delta_1, \delta_2, \delta_3, \delta_4,\delta_5\}$,
the mapping $G\colon \M_\delta^K\to \M_\delta^K$ becomes a contraction on $\M_\delta^K $.

\noindent\cb{4} \textbf{ Application of the fixed-point theorem}\\
Finally, we combine all conditions \eqref{eq:wc1},\eqref{eq:wc2},\eqref{eq:wc3},\eqref{eq:wc4} and \eqref{eq:wc5} on the parameter $\delta$ and define
\begin{align}
    0<\delta = \delta_0:= \min\Big\{\delta_1, \delta_2, \delta_3, \delta_4,\delta_5 \Big\}. \label{eq:weightedlpdcond}
\end{align}
By the Banach fixed-point theorem, we obtain the existence of a unique fixed point  $u^\ast\in\M^K_\delta \subset{W^{2,a}_p({\Omega})}$ satisfying
$
	u^\ast=G u^\ast.
$
Therefore,
$u^\ast\in W^{2,a}_p(\Omega)$ is  a variational solution of the Willmore equation in the space $W^{2,a}_p(\Omega)$. 
The interior regularity of $u^\ast$ is established via a bootstrapping argument, as detailed in \cite[Subsection 5.5]{gulyak2024boundary}.
\end{proof}

\subsection{Pure Lipschitz Boundary Case}
\label{subsec:purelipsch} 
As mentioned earlier in Subsection~\ref{eq:weightedsobolev}, there exists a trade-off between the regularity of the boundary $\partial\Omega$ and the regularity of the Dirichlet boundary data. If one wishes to relax the regularity assumptions on $\partial\Omega$, requiring only that it be a Lipschitz boundary and omitting the abstract condition \eqref{eq:condnuA}, this comes at the cost of stricter requirements, represented by constant $\Cr{2vmobound}$ in the following theorem, on the parameters $a$ and $p$  in the weighted Sobolev spaces, as well as on the regularity of the boundary data.
\setcounter{thm}{1}
\begin{thm}
Assume that $\Omega\subset \R^2$ is a bounded Lipschitz domain whose Lipschitz constant
does not exceed $M$. Then there exist a constant $\Cr{2vmobound}=\Cr{2vmobound}(M)>0$ such that if
\begin{itemize}
    \item  $p\in(2,\Cr{2vmobound}), a\in \Big(0,1-\frac{2}{p}\Big)\cap(0,\Cr{2vmobound})$ with $s:=1-a-\frac{1}{p}$,
    \item $g:=\{g_0,g_1\}\in \dot W^{1+s}_p(\partial\Omega)$ such that $g_0, \nu g_1+\nabla_{\rm tan} g_0\in L^{\infty}(\partial\Omega)$,
    \item $\|g\|_{\dot W^{1+s}_p(\partial\Omega)} <K$ for some $K>0$.
\end{itemize}

Then there exists a constant $\delta=\delta(\Omega,K,p,a,M)>0$ such that if 
\begin{align*}
    \|\nu g_1+\nabla_{\rm tan} g_0\|_{L^\infty(\Omega)}<\delta
\end{align*}
then there exists a variational solution $u\in {W^{2,a}_p(\Omega)}\cap C^\infty(\Omega)$ to the Willmore-type Dirichlet problem, thus $u$ solves \eqref{eq:Willmoreellipt} with the right-hand side \eqref{eq:bi}.
\end{thm}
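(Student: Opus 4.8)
The statement is a specialization of Theorem~\ref{thm:wlpexistweight}: the only place where the abstract smallness condition \eqref{eq:condnuA} on the BMO-modulo-VMO character $\{\nu\}_{\ast,\partial\Omega}$ enters the proof of that theorem is through the biharmonic existence and estimate result Proposition~\ref{thm:prop1}, and hence (since Proposition~\ref{weightschauderlpdiv} uses \eqref{eq:condnuA} only to invoke Proposition~\ref{thm:prop1}) through Proposition~\ref{weightschauderlpdiv}. The plan is therefore to replace \eqref{eq:condnuA} by the parameter restriction of Remark~\ref{remark:lipsch} and then to re-run the fixed-point scheme of Theorem~\ref{thm:wlpexistweight} essentially unchanged.

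First I would invoke Remark~\ref{remark:lipsch} (i.e. Theorem~2, p.~44 of \cite{maz2011recent}) to obtain a constant $\varepsilon=\varepsilon(M)>0$, depending only on the bound $M$ on the Lipschitz constant of $\partial\Omega$, such that the conclusion of Proposition~\ref{thm:prop1} holds \emph{without} assuming \eqref{eq:condnuA}, provided the exponents satisfy \eqref{eq:condpae}, i.e. $\big(\tfrac12-\tfrac1p\big)<\varepsilon$ and $|a|<\varepsilon$. Then I would set
\[
\Cr{2vmobound}:=\Cr{2vmobound}(M):=\frac{2}{1-\varepsilon}>2 .
\]
For $p\in(2,\Cr{2vmobound})$ one has $\tfrac1p>\tfrac1{\Cr{2vmobound}}=\tfrac12-\tfrac\varepsilon2$, hence $\tfrac12-\tfrac1p<\tfrac\varepsilon2<\varepsilon$; and for $a\in(0,1-\tfrac2p)$ one has $a<1-\tfrac2{\Cr{2vmobound}}=\varepsilon$, so \eqref{eq:condpae} is automatically satisfied (the extra intersection with $(0,\Cr{2vmobound})$ in the hypotheses is harmless, since $\Cr{2vmobound}>2>1-\tfrac2p$). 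Consequently Proposition~\ref{thm:prop1}, and therefore Proposition~\ref{weightschauderlpdiv}, hold for \emph{every} admissible pair $(p,a)$, on an arbitrary bounded Lipschitz domain with Lipschitz constant at most $M$. Moreover the interpolation exponent $q=\tfrac12\big(\tfrac2{1-a}+p\big)\in(2,p)$ used in the proof of Theorem~\ref{thm:wlpexistweight} also lies in $(2,\Cr{2vmobound})$ and is paired there with the same weight exponent $a$, so the auxiliary applications of Proposition~\ref{weightschauderlpdiv} with $p$ replaced by $q$ (and $t=q/2$) are equally covered, and likewise those with $t=p/2$ or the arithmetic-mean value of $t$.

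With these ingredients in place, the remaining tools used in Proposition~\ref{weightschauderlpdiv} — the Agmon–Miranda maximum modulus estimate of Pipher–Verchota \cite{Pipher1993} and the Dahlberg square-function bound \cite{dahlberg1997area} — are valid on every bounded Lipschitz domain and require no smallness of the Lipschitz constant. Hence the argument of Theorem~\ref{thm:wlpexistweight} carries over verbatim: one defines the iteration map $G$ and the closed set $\M^K_\delta\subset W^{2,a}_p(\Omega)$ exactly as there, verifies the self-map and contraction properties under the five smallness conditions on $\delta$ (now with generic constants that additionally depend on $M$ through $\Cr{2vmobound}$, $\Cr{lpestimate}$, $\Cr{miranda1}$, and the interpolation parameters), applies the Banach fixed-point theorem to produce $u\in W^{2,a}_p(\Omega)$ solving \eqref{eq:Willmoreellipt} with right-hand side \eqref{eq:bi} in the variational sense, and obtains $u\in C^\infty(\Omega)$ by the same interior bootstrap. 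The resulting threshold is $\delta=\delta(\Omega,K,p,a,M)>0$.

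I do not expect a genuinely new analytic difficulty here: all the hard work is already contained in Remark~\ref{remark:lipsch} and in the proof of Theorem~\ref{thm:wlpexistweight}. The one point that must be handled with care is the \emph{uniform} choice of $\Cr{2vmobound}(M)$, so that every invocation of Proposition~\ref{thm:prop1}/Proposition~\ref{weightschauderlpdiv} occurring in the fixed-point argument — in particular the ones with $p$ replaced by the interpolation exponent $q$ — falls inside the admissible window \eqref{eq:condpae}; together with the bookkeeping that every constant now inherits a dependence on $M$.
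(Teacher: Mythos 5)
Your proposal is correct and follows essentially the same route as the paper: the paper's own proof is the one-line reduction ``apply \cite[Theorem 2, p.~44]{maz2011recent} in place of Proposition~\ref{thm:prop1}, and repeat the argument of Theorem~\ref{thm:wlpexistweight}'', which is precisely what you carry out, with the added (and welcome) care of exhibiting an explicit $\Cr{2vmobound}=2/(1-\varepsilon)$ and checking that both $p$ and the interpolation exponent $q\in(2,p)$ stay inside the window \eqref{eq:condpae}.
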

\begin{proof} See Remark \ref{remark:lipsch}. In this proof, we apply \cite[Theorem 2, p. 44 ]{maz2011recent} in place of Proposition~\ref{thm:prop1}, while employing the same techniques as in the proof of Theorem~\ref{thm:wlpexistweight}.
\end{proof}

\subsection{Hölder Boundary Data}
\label{subsec:hoelderboundary}
In this subsection, we establish existence results for boundary value problems with Hölder continuous boundary data. The key observation is that the Hölder spaces $C^{\alpha}(\partial\Omega)$ are  continuously embedded into certain Besov spaces, which in turn characterize the Dirichlet boundary data spaces $ \dot W^{1+s }_p(\partial\Omega)$ for $s<\alpha$. This embedding allows us to solve the corresponding weighted Sobolev boundary value problem, since for any fixed $\alpha$ one can choose some $p>2$ such that the required regularity is satisfied.
\setcounter{thm}{0}
\begin{thm}
Let $\Omega\subset\R^2$ be a bounded domain with $\partial\Omega\in C^{1+\alpha}$ for some $\alpha\in(0,1)$. Assume that $\beta\in(0,\alpha), g_0\in C^{1+\alpha}(\partial\Omega)$ and $g_1\in C^{\alpha}(\partial\Omega)$. Additionally, we suppose that $\|g_0'\|_{C^{\alpha}(\partial\Omega)}+ \|g_1\|_{C^{\alpha}(\partial\Omega)} <K$ for some $K>0$. 

Then there exists a constant $\delta=\delta(\alpha,\beta, K, \Omega)>0$ such that if $\|g_0'\|_{C^0(\partial\Omega)}+\|g_1\|_{C^{0}(\partial\Omega)}<\delta$, then there exists a solution $u\in C^{1+\beta}(\overline\Omega)\cap C^\infty(\Omega)$ to the Willmore Dirichlet problem \eqref{eq:dirwillmore}.

\end{thm}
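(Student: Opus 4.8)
\emph{Overall plan.} The strategy is to deduce Theorem~\ref{thm:weightedmainhoelder} from the weighted Sobolev existence result Theorem~\ref{thm:wlpexistweight}. The bridge is that Hölder data of class $C^{1+\alpha}$ resp.\ $C^{\alpha}$ embeds — after a harmless normalization of $g_0$ by an additive constant — into the Dirichlet trace space $\dot W^{1+s}_p(\partial\Omega)$, provided one picks the parameters $p>2$ and $s\in(0,1)$ compatibly, and that the weighted solution so obtained is automatically of class $C^{1+\beta}(\overline\Omega)$ by the boundary Hölder embedding of Lemma~\ref{lem:cemb}~(c). Once a variational solution of the reformulated equation \eqref{eq:willmoreeq1} is in hand, its interior smoothness together with Lemma~\ref{lem:willrewriten} turns it into a classical solution of the geometric problem \eqref{eq:dirwillmore}.

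\emph{Step 1: free hypotheses and choice of parameters; normalization.} Since $\partial\Omega\in C^{1+\alpha}\subset C^1$, the exterior normal $\nu$ is continuous, so $\{\nu\}_{\ast,\partial\Omega}=0$ and condition \eqref{eq:condnuA} holds automatically. Given $\beta\in(0,\alpha)$, I first fix $p>2$ so large that $\beta+\tfrac1p<\alpha$ and $p>\tfrac1{1-\alpha}$; then I choose $s\in\bigl(\beta+\tfrac1p,\alpha\bigr)$ and set $a:=1-s-\tfrac1p$. These choices give $a>1-\alpha-\tfrac1p>0$ and, because $s>\tfrac1p$ (which holds as $\beta>0$), also $a<1-\tfrac2p$; thus $(p,a,s)$ meets the assumptions of Theorem~\ref{thm:wlpexistweight}, and in addition $\beta<1-a-\tfrac2p=s-\tfrac1p$, to be used below. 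Next, since the geometric Willmore operator depends only on $\nabla u,D^2u,\dots$, the problem \eqref{eq:dirwillmore} is invariant under $u\mapsto u+c$; hence I may assume $g_0(x_0)=0$ for some fixed $x_0\in\partial\Omega$, which by integrating $g_0'$ along $\partial\Omega$ yields $\|g_0\|_{C^0(\partial\Omega)}\le \ell(\partial\Omega)\,\|g_0'\|_{C^0(\partial\Omega)}$ and therefore $\|g_0\|_{C^{1+\alpha}(\partial\Omega)}\le C(\Omega)\,\|g_0'\|_{C^{\alpha}(\partial\Omega)}$; this normalization leaves the smallness hypothesis on $\|g_0'\|_{C^0}+\|g_1\|_{C^0}$ untouched.

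\emph{Step 2: embedding the data and invoking Theorem~\ref{thm:wlpexistweight}.} Because $\partial\Omega\in C^{1+\alpha}$, the arclength parametrisation is $C^{1+\alpha}$ and $\nu\in C^{\alpha}(\partial\Omega)$; combined with $g_0\in C^{1+\alpha}(\partial\Omega)$, $g_1\in C^{\alpha}(\partial\Omega)$ this gives $\nabla_{\rm tan}g_0\in C^{\alpha}(\partial\Omega)$ and hence $\nu g_1+\nabla_{\rm tan}g_0\in C^{\alpha}(\partial\Omega)$. Using the standard embedding $C^{\gamma}(\partial\Omega)\hookrightarrow B^s_p(\partial\Omega)$, valid whenever $0<s<\gamma\le 1$ (and $C^{1+\alpha}\hookrightarrow C^{0,1}$), one gets $g_0\in B^s_p(\partial\Omega)$ and $\nu g_1+\nabla_{\rm tan}g_0\in B^s_p(\partial\Omega)$, so $g=\{g_0,g_1\}\in\dot W^{1+s}_p(\partial\Omega)$ by the characterization \eqref{eq:dirichspace}; compactness of $\partial\Omega$ gives $g_0,\ \nu g_1+\nabla_{\rm tan}g_0\in L^\infty(\partial\Omega)$. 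These embeddings being continuous, Step 1 yields
\[
\|g\|_{\dot W^{1+s}_p(\partial\Omega)}\le C(\Omega,p,s,\alpha)\bigl(\|g_0'\|_{C^{\alpha}(\partial\Omega)}+\|g_1\|_{C^{\alpha}(\partial\Omega)}\bigr)<C(\Omega,p,s,\alpha)\,K=:K'',
\]
and $\|\nu g_1+\nabla_{\rm tan}g_0\|_{L^\infty(\partial\Omega)}\le\|g_1\|_{C^0(\partial\Omega)}+\|g_0'\|_{C^0(\partial\Omega)}$ (using $|\nu|=1$ and $|\nabla_{\rm tan}g_0|=|g_0'|$ in arclength parametrisation). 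Taking $\delta$ to be the constant $\delta(\Omega,K'',p,a)$ furnished by Theorem~\ref{thm:wlpexistweight}, that theorem produces a variational solution $u\in W^{2,a}_p(\Omega)\cap C^\infty(\Omega)$ of \eqref{eq:willmoreeq1}–\eqref{eq:willmoreright} with trace data $u=g_0$, $\partial_\nu u=g_1$. By Lemma~\ref{lem:cemb}~(c) applied to $\nabla u$ with $n=2$, $W^{2,a}_p(\Omega)\hookrightarrow C^{1+\lambda}(\overline\Omega)$ for every $\lambda<s-\tfrac1p$, and since $\beta<s-\tfrac1p$ we may take $\lambda\in(\beta,s-\tfrac1p)$, whence $u\in C^{1+\beta}(\overline\Omega)$ and the boundary data is attained classically. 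Finally, interior smoothness turns the weak identity \eqref{eq:weakwillmore} into $\Delta^2u=D_ib_1^i[u]+D^2_{ij}b_2^{ij}[u]$ pointwise in $\Omega$, which by Lemma~\ref{lem:willrewriten} and \eqref{eq:willmoreeq} (and $Q\ge1>0$) is equivalent to $\Delta_{\Gamma(u)}H+2H\bigl(\tfrac14H^2-\mathcal{K}\bigr)=0$ in $\Omega$; adding back the constant removed in Step 1, $u$ solves \eqref{eq:dirwillmore}.

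\emph{Main obstacle.} The delicate point is the simultaneous bookkeeping of parameters in Step 1: one must keep $p>2$, $a\in(0,1-\tfrac2p)$, the Hölder–Besov embedding (needing $s<\alpha$), and the boundary Hölder embedding $W^{2,a}_p\hookrightarrow C^{1+\beta}$ (needing $\beta<s-\tfrac1p$) all valid at once; the constraint chain $\beta+\tfrac1p<s<\alpha$ is solvable precisely because $\beta<\alpha$ and $p$ may be taken large, but this is where the loss from $\alpha$ to $\beta$ — and the impossibility of reaching $\beta=\alpha$ — enters. A secondary subtlety is that the hypothesis controls only $\|g_0'\|_{C^{\alpha}}$, not $\|g_0\|_{C^0}$, which forces the additive normalization of $g_0$ and the use of translation invariance of the Willmore operator. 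Everything else is a routine application of the trace/embedding results of Section~\ref{sec:weightsob} and of Theorem~\ref{thm:wlpexistweight}.
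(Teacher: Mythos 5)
Your reduction to Theorem~\ref{thm:wlpexistweight}, the choice of parameters $\beta+\tfrac1p<s<\alpha$, $a=1-s-\tfrac1p$, and the normalization $g_0(x_0)=0$ are all in line with the paper's own argument. However, the sentence ``By Lemma~\ref{lem:cemb}~(c) applied to $\nabla u$ with $n=2$, $W^{2,a}_p(\Omega)\hookrightarrow C^{1+\lambda}(\overline\Omega)$'' is a genuine gap. Lemma~\ref{lem:cemb}~(c) only gives the embedding $\mathring W^{1,a}_p(\Omega)\hookrightarrow\hookrightarrow C^{\lambda}(\overline\Omega)$ for the \emph{homogeneous} space, and the step in its proof that upgrades the local Hölder class $C^{\lambda}(\Omega;1,1,\{B(t,\tfrac12 d(t))\})$ of Brown--Opic to a uniform $C^{\lambda}(\overline\Omega)$ explicitly uses that elements of $\mathring W^{1,a}_p$ vanish on $\partial\Omega$. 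But $\nabla u$ does \emph{not} vanish on $\partial\Omega$ (its trace is $\nu g_1+\nabla_{\rm tan}g_0$), so $\nabla u\notin\mathring W^{1,a}_p(\Omega)$ and the lemma is not applicable. In fact one cannot expect the full-space embedding $W^{2,a}_p(\Omega)\hookrightarrow C^{1+\lambda}(\overline\Omega)$ to hold for merely Lipschitz $\partial\Omega$: the trace of the normal derivative need not be Hölder continuous when $\nu$ is only $L^\infty$, so some extra boundary regularity must enter somewhere — and your Step~2 never uses $\partial\Omega\in C^{1+\alpha}$ beyond continuity of $\nu$.

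The paper closes this gap with an auxiliary splitting that you are missing: it solves the homogeneous biharmonic Dirichlet problem $\Delta^2 w=0$ in $\Omega$, $w=g_0$, $\partial_\nu w=g_1$ on $\partial\Omega$, invokes a Schauder estimate (\cite[Theorem~2.19, p.~45]{gazzola2010polyharmonic}, where $\partial\Omega\in C^{1+\alpha}$ is essential) to get $w\in C^{1+\alpha}(\overline\Omega)$, and then observes that $u-w\in \mathring W^{2,a}_p(\Omega)$ by \eqref{eq:rindwbound}, so that Lemma~\ref{lem:cemb}~(c) \emph{is} applicable to $\nabla(u-w)$, yielding $u-w\in C^{1+\beta}(\overline\Omega)$ and hence $u=w+(u-w)\in C^{1+\beta}(\overline\Omega)$. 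This Schauder detour is precisely where the hypothesis $\partial\Omega\in C^{1+\alpha}$ is consumed. Your proof needs this additional decomposition; once inserted, the rest of your argument (trace-space membership, parameter bookkeeping $s=\tfrac12(\alpha+\beta)$, $p=\tfrac2{\alpha-\beta}$, translation invariance to trade $\|g_0\|_{C^{1+\alpha}}$ for $\|g_0'\|_{C^{\alpha}}$, and the passage from the weak divergence form back to \eqref{eq:dirwillmore}) goes through exactly as in the paper.
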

\begin{proof} 
Let $s\in(\beta,\alpha) $ and $p>2$  where the precise choice of $p$ will be specified later in \eqref{eq:spparam} depending on $\alpha$ and $\beta$.
We begin by showing that $C^{\alpha}(\partial\Omega)\hookrightarrow \hookrightarrow B^{s}_p(\partial\Omega)$ for each $p>2$. Let $f\in C^{\alpha}(\partial{\Omega})$. We recall the definition of the Besov norm:
\begin{align*}
 \| f\| _{B^s_p(\partial\Omega)}&\ = \| f\|_{L^p(\partial\Omega)} 
 + \left(\int_{\partial\Omega}\int_{\partial\Omega} \frac{\big| f(x)-f(y) \big|^p}{|x-y|^{2-1+s p}} \diff S_x \diff S_y \right)^\frac{1}{p} \\
 &\ \le | \partial\Omega| \|f\|_{C^0(\partial\Omega)} + 
    \left(\int_{\partial\Omega}\int_{\partial\Omega} |x-y|^{-1+(\alpha-s )p} \diff S_x \diff S_y \right)^\frac{1}{p} \|f \|_{C^\alpha(\partial\Omega)}< \infty,
\end{align*} 
since $\alpha-s > 0$ and $\partial\Omega\in C^{1+\alpha}$. Furthermore, the condition $\nu\in C^{\alpha}(\partial\Omega)$ yields that $g'_0=\nabla_{\rm tan} g_0\in C^\alpha(\partial\Omega)$ and moreover $g_0\in L^1_p(\partial\Omega)$. Together with $g_1\in C^\alpha(\partial\Omega)$ it follows that $\nu g_1+\nabla_{\rm tan} g_0\in B^s _p(\partial\Omega)$ and we estimate 
\begin{align*}
    \|\nu g_1+\nabla_{\rm tan} g_0\|_{L^\infty(\partial\Omega)} \le \C(\partial\Omega)\big( \| g_0\|_{C^1(\partial\Omega)} + \|g_1\|_{C^0(\partial\Omega)} \big),
\end{align*}
therefore we conclude that the boundary data $g:=\{g_0,g_1\}$ belong to the trace space $ \dot W^{1+s }_p(\partial\Omega)$.

In order to apply Theorem~\ref{thm:wlpexistweight}, we must ensure that for a given $s$ the parameters $p$ and $a$ satisfy the conditions $p>2$ and $0\le a=1-s -\frac{1}{p}<1-\frac{2}{p}$. This leads to the requirement
\begin{align} \label{eq:condp}
    \frac{1}{p}< \min\left(s , 1-s\right).
\end{align}
Under this condition, together with the regularity assumptions on the boundary data, Theorem~\ref{thm:wlpexistweight} yields the existence of a solution $u\in W^{2,a}_p(\Omega)$ to the Willmore problem \eqref{eq:Willmoreellipt} with the right-hand side \eqref{eq:bi}. In particular, it follows that $u\in W^{2,2}_{\text{loc}}(\Omega)$.

Our next goal is to show that $u\in C^{1+\beta}(\overline{\Omega})$. Therefore, we  require a suitable embedding from weighted Sobolev spaces into Hölder spaces. By Lemma~\ref{lem:cemb} (c), with
$
    \lambda= 1-a-\frac{2}{p} = s-\frac{1}{p}
$ we obtain the compact embedding
\begin{align*}
    \mathring{W}^{2,a}_p(\Omega) \hookrightarrow\hookrightarrow C^{1+s-\frac{1}{p}}(\overline{\Omega}).
\end{align*}
To exploit this embedding, we seek a function $w\in C^{1+\beta}(\Omega)$ such that the difference  $w-u $ lies the homogeneous space $\mathring{W}^{2,a}_p(\Omega) $. 
Thus, let $w\in W^{2,a}_p(\Omega)$ be the solution to the biharmonic boundary value problem:
\begin{align*}
\begin{cases}
\Delta^2 w =0, \quad \text{ in } \ \Omega,\\
w= g_0, \quad  \partial_\nu u =  g_1 \quad \text{ on }\  \partial \Omega.
\end{cases}    
\end{align*}
Then, by \cite[Theorem 2.19, p. 45]{gazzola2010polyharmonic}, we obtain the following Schauder estimate:
\begin{align*}
\|w\|_{C^{1+\alpha}(\partial\Omega)}\le  \C\|g_0\|_{C^{1+\alpha}(\partial\Omega)} +
\C\|g_1\|_{C^{\alpha}(\partial\Omega)}.
\end{align*}

Finally, we specify the parameters $s$ and $p$ such that all necessary conditions are fully satisfied, depending on the Hölder powers $\alpha$ and $\beta$
\begin{align}
    s:= \frac{1}{2}(\alpha+\beta) \quad \text{ and } \quad p:= \frac{1}{s-\beta}=\frac 2{\alpha-\beta}. \label{eq:spparam}
\end{align}
Then we verify that the condition on $p$ from \eqref{eq:condp} is satisfied: 
\begin{align*}
    \frac{1}{p} =s -\beta < s\quad  \text{ and }
    \quad\frac{1}{p} = \frac{1}{2}(\alpha-\beta) = \alpha -s < 1-s.
\end{align*}
With this choice, and recalling that $w-u\in \mathring{W}^{2,a}_p(\Omega)  $, the compact embedding yields
 $w-u \in C^{1+\beta}(\overline{\Omega})$ and
\begin{align*}
    \|w-u\|_{C^{1+\beta}(\overline{\Omega})}\le \C\|w-u\|_{\mathring{W}^{2,a}_p(\Omega) }.
\end{align*}
Consequently, we conclude that $u\in  C^{1+\beta}(\overline{\Omega})\cap C^\infty(\Omega)$ using interior regularity. Lastly,  we observe that the graphical Willmore equation is invariant under horizontal translations. Therefore, the term $\|g_0\|_{C^{1+\alpha}(\partial\Omega)}$ in the estimate can be replaced by $\|g_0'\|_{C^{\alpha}(\partial\Omega)}$.
\end{proof}




{\bf Conflict of Interest:} The authors declared that they have no conflicts of interest to this work.

\

{\bf Availability of data and material:} Data sharing not applicable to this article as no datasets were generated or analyzed during the current study.

\

{\bf Acknowlegments:} The research was carried out during the author’s PhD under the supervision of Hans-Christoph Grunau. The author is very grateful for his guidance, careful reading, corrections, and helpful discussions.


\bibliographystyle{myalpha}
\bibliography{Willmore_Elliptic_Paper}


\end{document}